\newcommand{\R}{\mathbb{R}} 
\newcommand{\N}{\mathbb{N}} 
\newcommand{\id}{\mathrm{I\hspace{-0.5ex}I}}
\newcommand{\M}{\R^{2d}}
\newcommand{\Z}{Z_t}
\newcommand{\V}{V_t}
\newcommand{\X}{X_t}
\newcommand{\bl}{\left(}
\newcommand{\br}{\right)}
\newcommand{\lb}{\left[}
\newcommand{\rb}{\right]}
\renewcommand{\P}{\mathcal{P}}
\newcommand{\dist}{\text{dist}}
\newcommand{\E}{{\mathrm{I\hspace{-0.5ex}E}}}
\newcommand{\K}{\mathcal{K}}
\newcommand{\J}{\mathcal{J}}
\newcommand{\C}{\mathcal{C}}
\newtheorem{theorem}{Theorem}[section]
\newtheorem{prop}{Proposition}[section]
\newtheorem{lemma}{Lemma}[section]
\theoremstyle{remark}
\newtheorem{remark}{Remark}[section]
\newtheorem{ass}{Assumption}
\begin{document}
\title[Trend to equilibrium for a delay mean-field equation]{Trend to equilibrium for a delay Vlasov--Fokker--Planck equation and explicit decay estimates}
\author{A. Klar, L. Kreusser\and O. Tse}

\maketitle

\begin{abstract}
In this paper, a  delay Vlasov--Fokker--Planck equation associated to a stochastic interacting particle system with delay is investigated analytically.
 Under certain restrictions on the parameters well-posedness and ergodicity of  the mean-field equation are shown and an  exponential rate of convergence towards the unique stationary solution is proven as long as the delay is finite. For infinte delay i.e., when all the history of the solution paths are taken into consideration polynomial decay of the solution is shown. 
\end{abstract}

\section{Introduction}

In recent years, the trend to equilibrium for solutions of kinetic and hyperbolic equations has been widely investigated by applying entropy dissipation methods, see \cite{Desvillettes,hypocoercivity:villani} and \cite{BZ,nat,Che,CG07,klar2015entropy} for instance. In \cite{Dolbeault2009511} and \cite{Dolbeault2015}, Dolbeault et al. proposed a simplified approach to prove the trend to equilibrium for a large class of linear equations. Starting from an interacting particle model, Bolley et al. used a probabilistic approach in \cite{Bolley2010} to show convergence towards equilibrium for solutions of the Vlasov--Fokker--Planck equation in Wasserstein distance with an exponential rate.

The present research is motivated by the consideration of 
models  describing the lay-down of fibers in textile production processes, which include fiber-fiber interactions. Such models have been recently introduced in \cite{klar} by adding the interaction of structures into a well-investigated model for nonwoven production processes \cite{GoetzKMW2007,BonillaGKMW2008, 3dfiber, ApproximateModels}. In \cite{GoetzKMW2007}  fibers are  interpreted as  paths of a stochastic differential equation with a projection of the velocity to the unit sphere. Besides, this model takes into account the finite size of fibers and prevents self-intersection, as well as intersection among fibers. On the microscopic level, this fiber lay-down process results in a large, coupled system of stochastic delay differential equations, while a delay mean-field equation may be derived on the mesoscopic scale (cf. \cite{klar}). 
In order to ensure a high quality of the web and the resulting nonwoven fabrics it is of great interest to consider how the solutions of these models converge to  equilibrium. In particular the speed of convergence is an important indicator of the quality and favorable properties of the nonwovens such as homogeneity and load capacity \cite{grothaus}. Since faster convergence indicates a more uniform production of the nonwovens, analysis of the speed of convergence allows the adjustment of process parameters in such a way that the optimal production process can be performed.
Adapting the approach from  \cite{Dolbeault2009511}, convergence for a non-interacting fiber model  at an exponential rate to a unique stationary state has been proven in \cite{ExponentialRateOfConvergenceToEquilibrium}. The trend to equilibrium for this non-interacting fiber model has also been analyzed in \cite{grothaus} by using Dirichlet forms and operator semigroup techniques.

Since the interacting fiber model is an extension of the Vlasov--Fokker--Planck equation, a similar approach as in \cite{Bolley2010} may be utilized for analyzing the trend to equilibrium for this model. However, special care needs to be taken in order to deal first with the delay interaction term and second with the  projection of the velocities to the unit sphere   which is different to the system considered in \cite{Bolley2010}.

In the present investigation we deal only with the first problem, i.e. we consider a
classical  interacting particle model as in  \cite{Bolley2010, Carrillo:2010:10.1142/S0218202510004684} and  include a delay term as in the above mentioned paper \cite{klar}.
 This delay mean-field equation may be seen as an extension of the classical Vlasov--Fokker--Planck equation.
 The full interacting fiber equations from \cite{klar} will be considered in forthcoming research.

The paper is organized as follows: In Section 2, we  introduce the microscopic delay system and its mean-field counterpart. Section 3 is devoted to a classical and the well-posedness of the mean field problem. Section 4 is concerned with an  estimate for the delay mean field equations and  the long-time behavior of the  mean-field  model for the case of finite delay. The final convergence theorem is stated and proven, and the dependence of the rate of convergence on the different parameters is discussed as well as the case of infinite delay.

\section{Microscopic and mean-field delay models}

Our starting point is an interacting particle model, see for example \cite{Bolley2010}.
As discussed in the introduction we interpret a solution path of the model as a fiber. 
Since fiber-fiber interactions will involve either the full or a portion of the solution path, the resulting model includes information of  the past of the process, i.e. a delay term.

To describe the models we are interested in, we introduce a notation based on the notation in \cite{golse} that will be used in the sequel. Let $d\geq 2$ denote the number of dimensions and let $\M$ be the state space of the fibers. The state $\Z=(\X,\V)\in\M$ of a fiber at time $t$ with initial condition $Z_0(z)=z$ is given by its position $\X$ and its velocity $\V$. 

\subsection{Microscopic delay system}
This section is devoted to the description of the delay models and some associated equations. Let $d\geq 2$ denote the number of spatial dimensions. We consider $N$ interacting fibers with position $\X^i\in\R^d$ and velocity $\V^i\in\R^d$ at each time $t\ge 0$ for $i=1,\ldots,N$. The interacting fiber model is formulated as a stochastic delay differential equation with state space $(\X^i,\V^i)\in \M$, given by
\begin{subequations}\label{eq:N interacting fibers}
\begin{align}
 d\X^i&= \V^i dt\\
 d\V^i&= -\nabla_x\Phi(\X^i)d t - \bl\frac{1}{N}\sum\nolimits_j\frac{1}{t}\int_{0}^{t} \nabla_x U(\X^i-X_s^j)d s\br dt -\gamma\V^i d t + \sqrt{2\sigma} d W_t^i,
\end{align}
\end{subequations}
subjected to the initial conditions $(X_0^i,V_0^i)\in\M$. Here, $W_t^i$ for $i=1, \ldots, N$, denote independent standard Brownian motions on $\R^d$ and $\gamma>0$ is the friction coefficient. The vectors $(X_0^i,V_0^i)\in\M$ for $i=1, \ldots, N$ are typically taken to be independent and identically distributed random variables with a given law $\mu_0\in\P_2(\M)$, where $\P_2(\M)$ denotes the space of Borel probability measures with finite second moment. 
Further, $V$ is a coiling potential, which is of confinement type, and $U$ is an interaction potential. Note that the model includes a delay interaction term describing the interaction of fibers with each other and with themselves on the whole fiber length. The scaling by the factor $1/N$ is known as weak coupling scaling, which naturally leads to a mean-field equation (cf. \cite{Golse2003, neunzert}). Further note that the non-retarded version of \eqref{eq:N interacting fibers}, i.e., without the delay term, is similar to models for swarming with roosting potential as discussed in \cite{Carrillo:2009:10.3934/krm.2009.2.363, Carrillo:2010:10.1142/S0218202510004684}.

In applications,  fibers have a finite length. This can be described by introducing a cut-off 
\begin{align*}\label{eq:cut-off h}
h(t)=\begin{cases} 
t & t\leq H \\ 
H & t>H \\ 
\end{cases},
\end{align*}
with the cut-off size $0\le H \le \infty$. In this case, the interacting fiber model becomes
\begin{subequations}\label{eq:N interacting general}
\begin{align}
 d\X^i&= \V^i dt\\
 d\V^i&= -\nabla_x\Phi(\X^i)d t - \bl\frac{1}{N}\sum\nolimits_j\frac{1}{h(t)}\int_{t-h(t)}^{t} \nabla_x U(\X^i-X_s^j)d s\br dt -\gamma\V^i d t+ \sqrt{2\sigma} d W_t^i.
\end{align}
\end{subequations}
Note that we obtain the non-retarded interacting particle model in the limit $H\to 0$, whereas taking the limit $H\to\infty$ results in our previous model \eqref{eq:N interacting fibers}.
In the following we will investigate the different cases.

\subsection{The mean-field equation}
In this section, we formally derive the mean-field equation corresponding to \eqref{eq:N interacting general}. The idea behind the mean-field limit lies in replacing the pairwise interaction term for each fiber $i=1,\ldots,N$ with
\[
 \frac{1}{h(t)}\int_{t-h(t)}^{t} \int_{\M} \nabla_x U(\X^i- \hat x)\, d\mu_s^{(N)}(d\hat x,d\hat v)\,ds,
\]
where $\mu_t^{(N)}$ is the stochastic empirical measure defined by
\[
 \mu_t^{(N)}(d\hat x,d\hat v)=\frac{1}{N}\sum\nolimits_{i=1}^N \delta_{(\X^i,\V^i)}(d\hat x,d\hat v).
\]
In the limit $N\to\infty$, one can show that sequence of stochastic empirical measures $\{\mu_t^{(N)}\}_{N\in\N}$ converges in law towards a deterministic limit $\mu_t\in\P_2(\M)$, which satisfies the retarded Vlasov--Fokker--Planck equation
\begin{equation}\label{eq:meanfield distribution}
 \partial_t \mu_t +v\cdot\nabla_x \mu_t+S[\mu_t] = \nabla_v\cdot\big(\sigma\nabla_v\mu_t + \gamma v\mu_t\big),\qquad \lim\nolimits_{t\searrow 0} \mu_t = \mu_0\in\P_2(\M),
\end{equation}
in the distribution sense, where the deterministic force term $S[\mu_t]$ is given by
\begin{align}\label{eq:S_operator}
 S[\mu_t] = -\nabla_x\Phi\cdot \nabla_v \mu_t - \nabla_v \cdot \left[\bl  \frac{1}{h(t)}\int_{t-h(t)}^t\int_{\R^d} \nabla U(\cdot-\hat x) \rho_t(d\hat x)\,d s \br \mu_t\right].
\end{align}
Here, the spatial density $\rho_t$ denotes the first marginal of $\mu_t$, i.e., 
\[
 \rho_t(B) = \mu_t(B,\R^d)\quad\text{for any Borel set}\quad B\in\mathcal{B}(\R^d).
\]

Furthermore, it is known that an underlying nonlinear stochastic process $\{Z_t=(\X,\V)\}$ exists for the solution $\mu_t$ of \eqref{eq:meanfield distribution}, which satisfies the Mc-Kean type stochastic differential equation
\begin{subequations}\label{eq:meanfield}
\begin{align}
 d\X &= \V dt\\
 d\V &= -\nabla_x\Phi(\X)d t - \bl\frac{1}{h(t)}\int_{t-h(t)}^{t} \int_{\R^d} \nabla_x U(\X-\hat x)\rho_s(d\hat x) \,d s\br dt -\gamma\V d t + \sqrt{2\sigma} d W_t,
\end{align}
\end{subequations}
with the initial condition $\mu_0=\text{law}(Z_0)\in\P_2(\M)$ and where $\mu_t = \text{law}(Z_t)$ for all $t\ge 0$.

\begin{remark}
 The microscopic model \eqref{eq:N interacting general} may be seen as a particle approximation of solutions to \eqref{eq:meanfield distribution} for a large number $N\in\N$ of $\R^{2d}$-valued processes $(\X^i,\V^i)$ for $i=1,\ldots,N$.
\end{remark}

\section{Preliminary definitions and results}

\subsection{The Halanay inequality}
Gronwall established an inequality that provides an estimate that bounds a function satisfying a certain differential equation. Halanay showed a similar result for equations with time lag, which is stated  in the following two propositions.  For proofs 
we refer to  Halanay \cite{halanay}.
The first theorem is  a comparison theorem.
\begin{prop}\label{prop halanay}
Let $f\colon\R\times\R\times\R\to \R$ be a function such that $f(t,u,v)$ is continuous for all $(u,v)$ and all $t_0\leq t<t_0+\xi$ for some $\xi>0$. Further, let $f(t,u,v)$ be increasing with respect to $v$. Supposing that $(dy/dt)(t)\leq f\bl t,y(t),\sup_{s\in[t-H,t]}y(s)\br$ for $t_0\leq t<t_0+\xi$ and some constant $H>0$. If $\phi$ is the solution of the equation
\[
 \frac{d\phi}{dt}(t) = f\Big( t,\phi(t),\sup\nolimits_{s\in[t-H,t]}\phi(s)\Big)\qquad \text{for $t\in [t_0,t_0+\xi)$},
\]
with the initial condition $\phi(s) = y(s)$, $s\in [t_0-H,t_0]$, then $y(t)\leq \phi(t)$ for $t_0\leq t <t_0+\xi$.
\end{prop}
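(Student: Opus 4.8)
The plan is a contradiction argument based on the first time the subsolution $y$ would overtake $\phi$, combined with an $\varepsilon$-perturbation that upgrades the non-strict inequality to a strict one. For $\varepsilon>0$ let $\phi_\varepsilon$ be the solution, constructed on $[t_0,t_0+\xi)$ by the method of steps using the continuity of $f$, of
\[
 \frac{d\phi_\varepsilon}{dt}(t)=f\bigl(t,\phi_\varepsilon(t),\sup\nolimits_{s\in[t-H,t]}\phi_\varepsilon(s)\bigr)+\varepsilon,\qquad \phi_\varepsilon(s)=\phi(s)+\varepsilon\ \ \text{on}\ [t_0-H,t_0].
\]
I claim that $y(t)<\phi_\varepsilon(t)$ for all $t\in[t_0,t_0+\xi)$. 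Since $y(t_0)=\phi(t_0)<\phi_\varepsilon(t_0)$, if the claim failed there would be a smallest $t_1\in(t_0,t_0+\xi)$ with $y(t_1)=\phi_\varepsilon(t_1)$, and then $y\le\phi_\varepsilon$ on all of $[t_0-H,t_1]$ (it equals $\phi<\phi_\varepsilon$ on the history interval, and is $\le\phi_\varepsilon$ on $[t_0,t_1]$ by minimality of $t_1$).

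At this $t_1$ I would compare one-sided derivatives. On one hand, $y-\phi_\varepsilon$ vanishes at $t_1$ and is negative just to its left, so, both functions being differentiable there, $y'(t_1)\ge\phi_\varepsilon'(t_1)$. On the other hand, applying the hypothesis on $y'$, then using that the second arguments agree ($y(t_1)=\phi_\varepsilon(t_1)$), and that $f$ is increasing in its third argument together with $\sup_{s\in[t_1-H,t_1]}y(s)\le\sup_{s\in[t_1-H,t_1]}\phi_\varepsilon(s)$ (valid because $[t_1-H,t_1]\subset[t_0-H,t_1]$, where $y\le\phi_\varepsilon$),
\[
 y'(t_1)\le f\bigl(t_1,\phi_\varepsilon(t_1),\sup\nolimits_{s\in[t_1-H,t_1]}\phi_\varepsilon(s)\bigr)=\phi_\varepsilon'(t_1)-\varepsilon<\phi_\varepsilon'(t_1),
\]
a contradiction. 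This is precisely where the monotonicity of $f$ in $v$ is used, and working at the \emph{first} crossing time is what guarantees that the whole delay window $[t_1-H,t_1]$ lies in the region where $y\le\phi_\varepsilon$ is already known. Hence $y<\phi_\varepsilon$ on $[t_0,t_0+\xi)$ for every $\varepsilon>0$.

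Finally I would let $\varepsilon\downarrow0$. By the same comparison the functions $\phi_\varepsilon$ are monotone in $\varepsilon$, and on each compact subinterval they are equibounded and, via the equation, equicontinuous; Arzel\`a--Ascoli together with uniqueness of the solution $\phi$ of the unperturbed equation gives $\phi_\varepsilon\to\phi$ locally uniformly, whence $y\le\phi$ on $[t_0,t_0+\xi)$. I expect the real obstacle to be exactly this last step, together with producing $\phi_\varepsilon$ on the full interval $[t_0,t_0+\xi)$: with only continuity of $f$ one genuinely needs $\phi$ to be the \emph{distinguished} (unique, or maximal) solution, for otherwise one can conclude only $y\le\phi_{\max}$. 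In every application in this paper $f$ is affine, hence Lipschitz, so these subtleties disappear; in that case one may alternatively bypass the equation-level perturbation and compare $y$ directly with the explicit supersolution $\phi(t)+\varepsilon e^{\lambda(t-t_0)}$, choosing $\lambda$ larger than twice the Lipschitz constant of $f$, which keeps the entire argument at the level of elementary estimates.
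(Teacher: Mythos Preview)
The paper does not supply its own proof of this proposition; it states the result and refers the reader to Halanay's monograph for a proof. Your argument---perturb the comparison equation by $+\varepsilon$, rule out the first crossing by a derivative comparison exploiting monotonicity in $v$, then let $\varepsilon\downarrow 0$---is precisely the classical proof one finds in that reference, so you have in effect reconstructed what the paper is citing.

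Your own caveats are the only genuine issues, and you have identified them accurately. With merely continuous $f$ the statement should be read as a comparison with the \emph{maximal} solution $\phi$; existence of $\phi_\varepsilon$ on all of $[t_0,t_0+\xi)$ and the passage $\phi_\varepsilon\to\phi$ then go through by the standard Kamke/Peano argument (one shows $\phi_\varepsilon$ is monotone in $\varepsilon$ and that any limit solves the unperturbed equation, hence is dominated by the maximal solution; since $\phi_\varepsilon\ge\phi$ already, the limit is $\phi_{\max}$). As you note, for the only use made in the paper $f(t,u,v)=-au+bv$ is linear, so uniqueness, global existence of $\phi_\varepsilon$, and continuous dependence are automatic, and the proof is clean as written.
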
 

This is the used  to show a Gronwall type result that can be directly applied to obtain the desired  decay estimates in the case of finite cut-off. We give the proof for completeness.

\begin{prop}\label{lem:halanay}
Let $h$ denote a nonnegative, continuous and bounded function defined for $t\ge 0$ and $H=\sup_{t\ge 0}h(t)>0$. Further, let $y$ be a nonnegative function satisfying
\begin{align*}
\frac{d y}{d t}(t)\leq -a y(t)+b\sup\nolimits_{s\in[t-h(t),t]}y(s)\qquad \text{for $ t>t_0$},
\end{align*}
with initial data $y(s)=y(t_0)$, $s\in[t_0-H,t_0]$, where $a$ and $b$ are nonnegative constants satisfying $a>b\ge 0$. Then, $y$ may be estimated from above by
\begin{align*}
y(t)\leq y(t_0) \exp(-\lambda(t-t_0))\qquad \text{for all $t\geq t_0$},
\end{align*}
with the decay rate
\begin{align*}
 \lambda = a-\frac{1}{H}W(bH\exp(aH))>0,
\end{align*}
where $W$ is the product logarithm function, which satisfies $z=W(z)\exp(W(z))$ for any $z\in\R$.
\end{prop}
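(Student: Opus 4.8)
The plan is to prove the Halanay-type estimate by a comparison argument, using Proposition~\ref{prop halanay} to reduce the differential inequality for $y$ to an explicit computation for the solution of the associated delay equation. First I would introduce the candidate exponential bound $\phi(t) = y(t_0)\exp(-\lambda(t-t_0))$ and verify that, with $\lambda$ defined via the product logarithm, this $\phi$ is a supersolution of the comparison ODE $d\phi/dt = -a\phi(t) + b\sup_{s\in[t-h(t),t]}\phi(s)$. Concretely, for a decreasing exponential the supremum over the window $[t-h(t),t]$ is attained at the left endpoint $s = t-h(t)$, so $\sup_{s\in[t-h(t),t]}\phi(s) = y(t_0)\exp(-\lambda(t-h(t)-t_0)) = \phi(t)\exp(\lambda h(t))$. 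Since $h(t)\le H$ and $\lambda>0$, this is bounded by $\phi(t)\exp(\lambda H)$, so the right-hand side evaluated at $\phi$ is at most $(-a + b\exp(\lambda H))\phi(t)$, while $d\phi/dt = -\lambda\phi(t)$. Hence it suffices to check that $-\lambda \ge -a + b\exp(\lambda H)$, i.e. $a - \lambda \ge b\exp(\lambda H)$.

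Next I would verify this inequality directly from the definition of $\lambda$. Writing $\lambda = a - \tfrac1H W(bH\exp(aH))$, we have $H(a-\lambda) = W(bH\exp(aH))$, and using the defining identity $z = W(z)\exp(W(z))$ with $z = bH\exp(aH)$ gives $W(z)\exp(W(z)) = bH\exp(aH)$, that is $H(a-\lambda)\exp(H(a-\lambda)) = bH\exp(aH)$. Dividing by $H\exp(aH)$ and rearranging yields $(a-\lambda)\exp(-\lambda H) = b$, equivalently $a - \lambda = b\exp(\lambda H)$; so the required inequality in fact holds with equality, which is consistent with the exponential being the exact solution of the comparison equation rather than a strict supersolution. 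I would also record that $\lambda>0$: this is where the hypothesis $a>b\ge 0$ (together with $H>0$) enters, since $bH\exp(aH) < aH\exp(aH)$ and $W$ is increasing on the relevant range, giving $W(bH\exp(aH)) < W(aH\exp(aH)) = aH$, hence $\lambda = a - \tfrac1H W(bH\exp(aH)) > 0$; one should also note $bH\exp(aH)\ge 0$ so that $W$ is evaluated on $[0,\infty)$ where it is real, nonnegative and single-valued.

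With these computations in hand, the conclusion follows by invoking Proposition~\ref{prop halanay} with $f(t,u,v) = -au + bv$, which is continuous and increasing in $v$ as required, $t_0$ as given, and any $\xi>0$ (or $\xi = \infty$). The hypothesis of that proposition asks that $y$ satisfy $dy/dt(t)\le f(t,y(t),\sup_{s\in[t-H,t]}y(s))$; since $h(t)\le H$ and $y\ge 0$, we have $\sup_{s\in[t-h(t),t]}y(s)\le \sup_{s\in[t-H,t]}y(s)$, so the given inequality for $y$ implies the one with window $H$, and the constant initial data $y(s)=y(t_0)$ on $[t_0-H,t_0]$ matches. The comparison principle then gives $y(t)\le\phi(t) = y(t_0)\exp(-\lambda(t-t_0))$ for all $t\ge t_0$, provided we confirm that $\phi$ solves the comparison equation with the same initial data on $[t_0-H,t_0]$, namely $\phi(s)=y(t_0)$ there — but on that interval $\phi$ is not exactly constant, so a slightly cleaner route is to apply Proposition~\ref{prop halanay} with initial function equal to the constant $y(t_0)$ and let $\phi$ be the genuine solution of the delay equation, then show separately that this $\phi(t)\le y(t_0)\exp(-\lambda(t-t_0))$ for $t\ge t_0$ by the supersolution estimate above.

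I expect the main subtlety to be precisely this bookkeeping about initial data on the history interval $[t_0-H,t_0]$ and the distinction between "$\phi$ is the exact solution of the delay equation" and "$\phi$ is the explicit exponential": the exponential is a supersolution that agrees with the initial value at $t_0$ but dominates the constant initial data on $[t_0-H,t_0)$, so one must be careful to apply the comparison theorem with the right comparison function and then estimate its solution, rather than claiming the exponential itself is the solution $\phi$. The analytic content beyond this is light — it is the single identity $(a-\lambda)\exp(-\lambda H)=b$ coming from the Lambert $W$ function, plus monotonicity of $W$ for the positivity of $\lambda$.
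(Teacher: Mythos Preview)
Your proposal is correct and follows essentially the same route as the paper: reduce the window from $h(t)$ to $H$ using $y\ge 0$, verify that the exponential $\phi(t)=y(t_0)\exp(-\lambda(t-t_0))$ satisfies the characteristic relation $a-\lambda=b\exp(\lambda H)$ (the paper checks this by direct substitution, you via the defining identity of $W$), and then invoke Proposition~\ref{prop halanay}. Your caution about the initial data on $[t_0-H,t_0]$ is well placed---the paper in fact glosses over exactly this point, tacitly treating the exponential as if it matched the constant history $y(t_0)$---so your suggested two-step fix (apply the comparison principle to the genuine solution with constant history, then bound that solution by the exponential supersolution) is a genuine improvement in rigor over the paper's argument.
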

\begin{proof}
 Let $f\colon\R\to\R$, $\lambda\mapsto -a+\lambda+b \exp(\lambda H)$ and let
\begin{align}\label{eq: def phi}
\phi(t):=y(t_0)\exp(-\lambda(t-t_0))
\end{align}
where $\lambda$ satisfies $f(\lambda)=0$. We begin by showing that 
\[
 \lambda=a-\frac{1}{H}W(bH\exp(aH))
\]
is the unique zero of $f$. Indeed, simple calculations lead to
\begin{align*}
f(\lambda)&=-\frac{1}{H}W(bH\exp(a H)) + b\exp(a H)\exp(-W(bH\exp(a H)))\\&=-\frac{W\bl bH\exp\bl a H\br\br}{H}+b\exp\bl a H\br\bl\frac{bH\exp\bl a H\br}{W\bl b H\exp\bl a H\br\br}\br^{-1}=0,
\end{align*}
where we used the definition of $W$ in the second equality. Hence, $\lambda$ is indeed a zero of $f$. Since $f$ is strictly increasing and $f(0)=b-a<0$, $\lambda$ is positive and is the unique zero of $f$. 

Now consider the estimate
\[
 \frac{dy}{dt}(t) \leq -a y(t)+b\sup\nolimits_{s\in[t-h(t),t]}y(s)\leq -a y(t)+b\sup\nolimits_{s\in[t-H,t]}y(s),
\]
where the second inequality holds since $y$ is nonnegative, and the differential equation 
\begin{align}\label{eq:dgl help}
 \frac{d\phi}{dt}(t) = -a \phi(t)+b\sup\nolimits_{s\in[t-H,t]}\phi(s)\qquad \text{for $t>t_0$},
\end{align}
with initial data $\phi(s)=y(t_0)$ for $s\in[t_0-H,t_0]$. We show that $\phi$, as defined in \eqref{eq: def phi}, satisfies the differential equation \eqref{eq:dgl help}. Indeed, since by definition, we have
\begin{align*}
 -a \phi(t)+b\sup\nolimits_{s\in[t-H,t]}\phi(s)&= -a \phi(t)+b\phi(t-H)\\
 &=-a y(t_0)\exp(-\lambda(t-t_0)) + b y(t_0)\exp(-\lambda(t-H-t_0)) \\
 &=-\lambda y(t_0)\exp(-\lambda(t-t_0)) = \frac{d\phi}{dt}.
\end{align*}
Thus, applying Proposition~\ref{prop halanay} yields the estimate
\[
 y(t)\leq \phi(t)=y(t_0)\exp(-\lambda(t-t_0)) \qquad \text{for all $t\ge t_0$},
\]
thereby concluding the proof.
\end{proof}

\subsection{Metrics in Wasserstein space}

The set of Borel probability measures on the state space $\M$ is denoted by $\mathcal{P}(\M)$. Further, we denote by 
\[
\mathcal{P}_2(\M):=\left\lbrace \mu\in\P(\M) \quad\text{such that}\quad \int_{\M}|z|^2 \mu(dz)<\infty\right\rbrace,
\]
to be the set of Borel probability measures on $\M$ with finite second moment. This space may be equipped with the Wasserstein distance $\dist_2$ defined by 
\[
 \dist_2(\mu,\hat \mu)^2 =\inf_{\pi\in\Pi(\mu,\hat \mu)} \bl \int_{\M\times\M} |z-\hat z|^2\pi(d z,d\hat z)\br,\qquad \mu,\hat \mu\in\P_2(\M),
\]
where $\Pi(\mu,\hat \mu)$ denotes the collection of all Boral probability measures on $\M\times\M$ with marginals $\mu$ and $\hat \mu$ on the first and second factors respectively. The set $\Pi(\mu,\hat \mu)$ is also known as the set of all couplings of $\mu$ and $\hat\mu$. Equivalently, the Wasserstein distance may be defined by
\[
 \dist_2(\mu,\hat \mu)^2 = \inf \E\lb|Z-\hat Z|^2\rb,
\]
where the infimum is taken over all joint distributions of the random variable $Z$ and $\hat Z$ with marginals $\mu$ and $\hat \mu$ respectively.

Moreover, for a given positive quadratic form $Q\colon\M\to\R$, we define the distance, compare \cite{Bolley2010}
\begin{align*}
\dist_Q(\mu,\hat \mu)^2 = \inf_{\pi\in\Pi(\mu,\hat \mu)} \bl \int_{\M\times\M} Q\bl z-\hat z\br\pi(d z,d \hat z)\br =\E \lb Q(Z-\hat Z)\rb
\end{align*}
for any two probability measures $\mu,\hat \mu\in\P_2(\M)$.

\begin{remark}\label{rem:quadratic}
We note that 
 \begin{enumerate}
  \item $\dist_2$ metricizes the weak convergence in the Wasserstein space $\P_2(\M)$, as well as the convergence of the first two moments.  One can also show that $(\P_2(\M),\dist_2)$ is a complete metric space \cite{completemetricspace, optimaltransport:villani}.
  \item For $Q(z)=|z|^2$, we have that $\dist_Q=\dist_2$.
  \item For any $z\in\M$, let $z=(z_1,z_2)$ with $z_i\in\R^d$, $i\in\{1,2\}$. We define the quadratic form
  \[
   Q(z) = a|z_1|^2 + 2\langle z_1,z_2\rangle + b|z_2|^2,
  \]
  with positive constants $a,b>0$, where $\langle\cdot,\cdot\rangle$ denotes the euclidean scalar product in $\R^d$. If the constants $a,b$ satisfy, additionally, the inequality $ab>1$, then there exists $p,q>0$ such that $\dist_2$ and $\dist_Q$ are equivalent. More specifically, since
  \[
   p|z|^2 \le Q(z) \le q|z|^2,
  \]
  holds for all $z\in\M$, with
  \[
   p = \frac{(a + b) - \sqrt{4 + (b-a)^2}}{2},\qquad q =\frac{(a + b) + \sqrt{4 + (b-a)^2}}{2},
  \]
  integrating over $\pi\in\Pi(\mu,\hat \mu)$ and minimizing over all coupling yields the required assertion.
 \end{enumerate}
\end{remark}

\begin{lemma}\label{lem:quadratic}
 Let $Q$ be a positive quadratic form on $\M$ as in Remark~\ref{rem:quadratic}(3), i.e.,
 \[
  Q(z) = a|z_1|^2 + 2\langle z_1,z_2\rangle + b|z_2|^2,\qquad a,b>0.
 \]
 If further $ab>1$, then $Q$ may be represented as
 \[
  Q(z) = \langle z, M_Qz\rangle = \langle Sz,Sz \rangle,
 \]
 where $M_Q$ and $S\in\text{Mat}\,(\M)$ are invertible block matrices of the form
 \[
  M_Q = \begin{bmatrix}
   a\,\id_d & \id_d \\
   \id_d & b\,\id_d
  \end{bmatrix},\qquad S=\frac{1}{\sqrt{a}}\begin{bmatrix}
   a\,\id_d & \id_d \\[1ex]
   0 & \sqrt{ab-1}\,\id_d
  \end{bmatrix},
 \]
 with $\id_d$ being the identity matrix on $\R^d$.
\end{lemma}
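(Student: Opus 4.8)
The plan is a direct verification in three short steps, the content of the lemma being the explicit closed forms of $M_Q$ and $S$ rather than any real difficulty.

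\textbf{Step 1 (the quadratic identity).} Writing $z=(z_1,z_2)$ with $z_i\in\R^d$, I would compute $M_Qz=(a z_1+z_2,\,z_1+b z_2)$, so that by symmetry of the Euclidean scalar product
\[
 \langle z,M_Qz\rangle = a|z_1|^2 + \langle z_1,z_2\rangle + \langle z_2,z_1\rangle + b|z_2|^2 = a|z_1|^2 + 2\langle z_1,z_2\rangle + b|z_2|^2 = Q(z).
\]
This needs nothing beyond expanding the block product, so it is purely routine.

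\textbf{Step 2 (the factorization).} The matrix $S$ is, up to the scalar factor $a^{-1/2}$, the block Cholesky factor of $M_Q$, and the claim $Q(z)=\langle Sz,Sz\rangle$ is equivalent to $M_Q=S^\top S$. I would verify this by a direct block multiplication,
\begin{align*}
 S^\top S &= \frac1a\begin{bmatrix} a\,\id_d & 0\\ \id_d & \sqrt{ab-1}\,\id_d\end{bmatrix}\begin{bmatrix} a\,\id_d & \id_d\\ 0 & \sqrt{ab-1}\,\id_d\end{bmatrix}\\
 &= \frac1a\begin{bmatrix} a^2\,\id_d & a\,\id_d\\ a\,\id_d & ab\,\id_d\end{bmatrix} = \begin{bmatrix} a\,\id_d & \id_d\\ \id_d & b\,\id_d\end{bmatrix} = M_Q,
\end{align*}
using $1+(ab-1)=ab$ in the bottom-right block. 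Then $\langle Sz,Sz\rangle = \langle z, S^\top S z\rangle = \langle z, M_Q z\rangle = Q(z)$, which combined with Step 1 gives the displayed chain of equalities.

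\textbf{Step 3 (invertibility).} Since $S$ is block upper triangular with diagonal blocks $\sqrt a\,\id_d$ and $a^{-1/2}\sqrt{ab-1}\,\id_d$ — both nonsingular because $a>0$ and $ab>1$ — it is invertible, with $\det S=(ab-1)^{d/2}\neq 0$; hence $M_Q=S^\top S$ is invertible as well (indeed symmetric positive definite, its eigenvalues being precisely the numbers $p$ and $q$ of Remark~\ref{rem:quadratic}(3), each of multiplicity $d$). This is the one place where the standing hypothesis $ab>1$ genuinely enters: it is exactly the condition $p>0$, i.e. positive definiteness of $M_Q$, and it is what makes the real square root $\sqrt{ab-1}$ in the definition of $S$ meaningful. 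There is thus no serious obstacle; the only point requiring a moment's thought is guessing the correct (Cholesky) factor, which will be used later to transport the distance $\dist_Q$ through the linear change of variables $z\mapsto Sz$.
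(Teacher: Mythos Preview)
Your proposal is correct and complete. The paper states this lemma without proof, presumably because it is a routine linear-algebra verification; your three-step direct check (expanding $\langle z,M_Qz\rangle$, verifying $S^\top S=M_Q$ by block multiplication, and reading off invertibility from the triangular structure) is exactly the argument one would supply.
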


\subsection{Well-posedness of the delay mean-field equation}

In the following, we will make use of the fact that an underlying stochastic process $\{Z_t=(\X,\V)\}$ exists for $\mu_t$ satisfying \eqref{eq:meanfield}. In fact, we will consider a more general form of our mean-field equation, namely the problem, compare again \cite{Bolley2010} for the case without delay
\begin{subequations}\label{eq:general_meanfield}
\begin{align}
 d\X &= \V dt\\
 d\V &= A(\X) dt + \bl\frac{1}{h(t)}\int_{t-h(t)}^{t} \int_{\R^d} B(\X,\hat x)\rho_s(d\hat x) \,d s\br dt -\gamma\V d t + \sqrt{2\sigma} d W_t,
\end{align}
\end{subequations}
where $A$ and $B$ satisfies the following assumptions:
\begin{ass}\label{ass}
\begin{enumerate}
  \item We assume that $A\colon\R^d\to\R^d$ is of the form
  \[
   A(x) = -\alpha x + g(x)
  \]
  for some $\alpha>0$ where $g$ is Lipschitz continuous with Lipschitz constant $c_g>0$.
  \item The function $B\colon \R^d\times\R^d\to\R^d$ satisfies 
  \begin{align*}
   |B(x,\hat x) - B(y,\hat x)| + |B(x,\hat x) - B(x,\hat y)| \le c_B\big( |x-y| + |\hat x - \hat y| \big),
  \end{align*}
  for some constant $c_B>0$, independent of $x,\hat x,y,\hat x\in\R^d$.
\end{enumerate}
\end{ass}

\begin{remark}
 Without loss of generality, we may further assume that $\alpha\equiv 1$, which may be justified by a rescaling of time $\tau=\sqrt{\alpha}\,t$, as in the case of a simple harmonic oscillator.
\end{remark}

The next proposition provides a well-posedness result for problem \eqref{eq:general_meanfield}. This result generalizes the deterministic result provided in \cite{klar}  to the stochastic case. We note that \cite{klar} the velocities are projected to the unit sphere in contrast to the present case.
The arguments of the proof follow those made in \cite{Golse2012}  and \cite{klar}.
We include the proof in Appendix~\ref{proof:wellposedness} for the sake of completeness.

\begin{prop}\label{prop:wellposedness}
 Under Assumption~\ref{ass}, there exists a unique solution $\mu\in\C([0,T],\P_2(\M))$ of the mean-field equation
\begin{equation}
 \partial_t \mu_t +v\cdot\nabla_x \mu_t+S[\mu_t] = \nabla_v\cdot\big(\sigma\nabla_v\mu_t + \gamma v\mu_t\big),\qquad \lim\nolimits_{t\searrow 0} \mu_t = \mu_0\in\P_2(\M),
\end{equation}
in the distribution sense, where the deterministic force term $S[\mu_t]$ is given by
\[
 S[\mu_t] = A\cdot \nabla_v \mu_t + \nabla_v \cdot \left[\bl  \frac{1}{h(t)}\int_{t-h(t)}^t\int_{\R^d} B(\cdot,\hat x) \rho_t(d\hat x)\,d s \br \mu_t\right].
\]
Equivalently, there exists a pathwise unique nonlinear process $\{\Z=(\X,\V),\,t\in[0,T]\}$ satisfying \eqref{eq:general_meanfield} with initial data $\mu_0=\text{law}(Z_0)\in\P_2(\M)$ and $\mu_t = \text{law}(Z_t)$ for all $t\in[0,T]$.
\end{prop}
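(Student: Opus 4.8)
The plan is to establish existence and uniqueness for the nonlinear McKean--Vlasov process \eqref{eq:general_meanfield} by a fixed-point argument in the space $\C([0,T],\P_2(\M))$, and then to transfer the result to the PDE \eqref{eq:meanfield distribution} via Itô's formula. First I would set up the map $\Gamma$ that sends a candidate flow of measures $\nu=(\nu_t)_{t\in[0,T]} \in \C([0,T],\P_2(\M))$ to the law of the solution of the \emph{linear} (non-McKean) SDE obtained by freezing the measure argument: given $\nu$, let $\rho_t^\nu$ be the first marginal of $\nu_t$, and solve
\[
 d\X = \V\,dt,\qquad d\V = A(\X)\,dt + \Big(\tfrac{1}{h(t)}\textstyle\int_{t-h(t)}^t\int_{\R^d} B(\X,\hat x)\,\rho_s^\nu(d\hat x)\,ds\Big)dt - \gamma\V\,dt + \sqrt{2\sigma}\,dW_t,
\]
with $\mathrm{law}(Z_0)=\mu_0$. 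Since $A$ is globally Lipschitz (it is the sum of a linear term and the Lipschitz function $g$ by Assumption~\ref{ass}(1)) and since $x\mapsto \int B(x,\hat x)\rho_s^\nu(d\hat x)$ is Lipschitz uniformly in $s$ with constant $c_B$ by Assumption~\ref{ass}(2), and moreover the time-averaging operator $\tfrac{1}{h(t)}\int_{t-h(t)}^t(\cdot)\,ds$ is a bounded operator, the drift of this linear SDE is globally Lipschitz in $(x,v)$ with a constant that is uniform on $[0,T]$; classical SDE theory then gives a pathwise unique strong solution with $\sup_{t\le T}\E|Z_t|^2<\infty$, so $\Gamma(\nu):=(\mathrm{law}(Z_t))_{t\le T}$ is well-defined as an element of $\C([0,T],\P_2(\M))$.

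Next I would prove that $\Gamma$ is a contraction on $\C([0,T_\ast],\P_2(\M))$ for $T_\ast$ small enough. Take two inputs $\nu,\tilde\nu$, solve the corresponding linear SDEs driven by the \emph{same} Brownian motion and the same initial condition, and estimate $\E|Z_t-\tilde Z_t|^2$. Writing $\Delta X=\X-\tilde X$, $\Delta V=\V-\tilde V$ and using $d|\Delta Z|^2 = (\,\text{drift difference}\,)\cdot \Delta Z\,dt$ (the noise cancels since the Brownian motions coincide), the Lipschitz bounds on $A$ and $B$ together with Young's inequality produce a differential inequality of the form
\[
 \frac{d}{dt}\,\E|\Delta Z_t|^2 \le C_1\,\E|\Delta Z_t|^2 + C_2\,\sup_{s\le t}\dist_2(\nu_s,\tilde\nu_s)^2,
\]
where the second term arises because $|\int B(x,\hat x)\rho_s^\nu(d\hat x)-\int B(x,\hat x)\rho_s^{\tilde\nu}(d\hat x)| \le c_B\,\dist_1(\rho_s^\nu,\rho_s^{\tilde\nu}) \le c_B\,\dist_2(\nu_s,\tilde\nu_s)$, and the time-average over $[t-h(t),t]$ is dominated by the supremum over $s\le t$ (after extending the measures constantly on $[-H,0]$, which is the natural reading of the initial data for the delay problem, or by simply using that for $s<0$ the integrand is controlled by the initial law). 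A Gronwall argument then gives $\sup_{t\le T_\ast}\E|\Delta Z_t|^2 \le C(T_\ast)\sup_{t\le T_\ast}\dist_2(\nu_t,\tilde\nu_t)^2$ with $C(T_\ast)\to 0$ as $T_\ast\to 0$; since $\dist_2(\Gamma(\nu)_t,\Gamma(\tilde\nu)_t)^2 \le \E|\Delta Z_t|^2$, this makes $\Gamma$ a contraction on a short interval. Banach's fixed point theorem yields a unique fixed point on $[0,T_\ast]$, and because $C(T_\ast)$ does not deteriorate as the starting time moves (the Lipschitz constants are time-uniform and the new initial data $\mu_{T_\ast}$ still lies in $\P_2(\M)$), the solution extends by iteration to the whole interval $[0,T]$. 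A fixed point of $\Gamma$ is precisely a solution of \eqref{eq:general_meanfield}, and pathwise uniqueness for the nonlinear equation follows by running the same $\E|\Delta Z_t|^2$ estimate with $\nu=\mathrm{law}(Z)$, $\tilde\nu=\mathrm{law}(\tilde Z)$ and invoking Gronwall, using $\dist_2(\mathrm{law}(Z_s),\mathrm{law}(\tilde Z_s))^2\le\E|\Delta Z_s|^2$.

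Finally, to connect this with the PDE: applying Itô's formula to $\varphi(Z_t)$ for $\varphi\in C_c^\infty(\M)$ and taking expectations shows that $\mu_t=\mathrm{law}(Z_t)$ solves \eqref{eq:meanfield distribution} with force term $S[\mu_t]$ as stated, in the distributional sense, and $t\mapsto\mu_t$ is continuous in $\dist_2$ because $\sup_{t\le T}\E|Z_t|^2<\infty$ and $t\mapsto Z_t$ is a.s.\ continuous (so $Z_s\to Z_t$ in $L^2$ as $s\to t$); uniqueness at the PDE level follows from uniqueness of the nonlinear process, since any distributional solution with finite second moments and the right regularity can be represented as the law of a solution to \eqref{eq:general_meanfield} (this is the standard superposition/McKean correspondence, and is exactly the step the paper defers to \cite{Golse2012} and \cite{klar}). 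The main obstacle I anticipate is bookkeeping around the delay term: one must be careful about how the initial datum is interpreted for $t<h(t)$ (the lower limit $t-h(t)$ can be negative), and must verify that the time-averaging operator interacts with the supremum-in-time norm in a way that keeps the contraction constant controllable; none of this is deep, but it is where the proof genuinely differs from the non-delay case of \cite{Bolley2010}, so it needs to be handled explicitly rather than cited.
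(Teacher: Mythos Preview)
Your proposal is correct and shares its overall architecture with the paper: both set up the same freezing map $\nu\mapsto\text{law}(Z^\nu)$ on $\C([0,T],\P_2(\M))$, derive an $L^2$ differential inequality for the difference of two solutions driven by the same Brownian motion, and close a fixed-point argument. The route to the fixed point differs, however. You argue a Banach contraction on a short interval $[0,T_\ast]$ (bounding the delay average by $\sup_{s\le t}\dist_2(\nu_s,\tilde\nu_s)^2$ and using Gronwall to make the contraction constant $C(T_\ast)\to 0$), then patch. The paper instead runs a Picard iteration $\mu^{(k+1)}=\mathcal{T}\mu^{(k)}$ directly on $[0,T]$: it keeps the time-average explicit, obtains the recursive bound $\frac{d}{dt}\mathcal{E}_k\le c_0\mathcal{E}_k+\frac{c_B}{t}\int_0^t\mathcal{E}_{k-1}\,ds$ (for the case $h(t)=t$), introduces a logarithmic kernel $g(t,s)=\ln t-\ln s$, and shows by iterated integration that $\mathcal{E}_k(t)\le (ce^{c_0t})^k t^k/k!$, whence $\sum_k\dist_2(\mu^{(k+1)}_t,\mu^{(k)}_t)<\infty$. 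Your approach is more elementary and treats all cut-offs $h$ uniformly; the paper's series argument avoids the short-time patching and mirrors the classical Picard--Lindel\"of proof, at the price of the extra computation with $g(t,s)$.

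One small correction: there is no issue with the lower limit of the delay integral becoming negative. By definition $h(t)=t$ for $t\le H$ and $h(t)=H$ for $t>H$, so $t-h(t)\ge 0$ always; your remark about extending measures to $[-H,0]$ is unnecessary.
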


\section{Long time behavior of the delay Vlasov--Fokker--Planck equation}
In this section, we prove a quantitative exponential convergence result for all solutions of \eqref{eq:meanfield} with a finite cut-off 
$0\le H < \infty$ associated to the microscopic system
(\ref{eq:N interacting general}). The proof is based on the idea of perturbing the Euclidean metric on $\M$ in such a way that \eqref{eq:meanfield distribution} is completely dissipative with respect to the new metric. This idea is also present in \cite{Bolley2010, stochasticHamiltonianDissipativeSystems, hypocoercivity:villani}.

\subsection{Decay estimates and explicit rates}

The key estimate  for proving the exponential convergence of all solutions to a unique equilibrium is given in the following proposition. 
\begin{theorem}\label{prop:decay}
Under Assumption~\ref{ass}, for any $\gamma>0$ and $\alpha\equiv 1$, there exists a constants $\lambda,\eta_0>0$ such that for any $c_g,c_B\geq 0$ with $\eta:=c_g+2c_B\in[0,\eta_0)$, the decay estimate
\begin{align}\label{eq:ineq dq}
 \dist_Q(\mu_t,\hat{\mu}_t)\leq \exp(-\lambda t)\dist_Q(\mu_0,\hat{\mu}_0)\qquad t\ge 0,
\end{align}
holds true for any solution $(\mu_t)_{t\geq 0}$, $(\hat{\mu}_t)_{t\geq 0}$ of \eqref{eq:general_meanfield} with corresponding initial data $\mu_0,\hat{\mu}_0\in\P_2(\M)$, where $Q$ is a positive quadratic form on $\M$ of the form
\[
 Q(x,v) = a|x|^2+2\langle x,v\rangle + b|v|^2,\qquad a,b>0.
\]
Furthermore, the decay rate $\lambda$ is explicitly given by
\begin{align}\label{eq:rate}
\lambda = \lambda_1 - \frac{1}{H}W(\lambda_2 H\exp(\lambda_1 H)),
\end{align}
where $\lambda_i$, $i=1,2$ are given in \eqref{eq:lambdas}.
\end{theorem}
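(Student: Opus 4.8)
The plan is to couple the two nonlinear processes $(\X,\V)$ and $(\Xh,\Vh)$ solving \eqref{eq:general_meanfield} with driving noise $W_t$ synchronized (so the stochastic terms cancel in the difference), and to estimate the evolution of $\E[Q(\X-\Xh,\V-\Vh)]$ in time. Writing $x_t = \X-\Xh$, $v_t = \V-\Vh$, the difference process is deterministic:
\begin{align*}
 \dot x_t &= v_t,\\
 \dot v_t &= A(\X)-A(\Xh) + F_t - \gamma v_t,
\end{align*}
where $F_t$ is the difference of the two delay interaction terms. Differentiating $Q(x_t,v_t) = a|x_t|^2 + 2\langle x_t,v_t\rangle + b|v_t|^2$ along this flow gives, using $A(x) = -x+g(x)$ with $\alpha\equiv 1$,
\[
 \tfrac{d}{dt}Q(x_t,v_t) = -2a\langle x_t,v_t\rangle \text{-type terms} + \text{the pure quadratic dissipation} + \text{perturbation terms in }c_g,c_B.
\]
The first step is thus to compute $\tfrac{d}{dt}Q$ explicitly for the linear part ($g\equiv0$, $B\equiv0$, i.e. the bare damped harmonic oscillator $\dot v_t = -x_t - \gamma v_t$) and choose the parameters $a,b$ of the quadratic form — subject to $ab>1$ so that Lemma~\ref{lem:quadratic} applies and $\dist_Q\sim\dist_2$ — so that $\tfrac{d}{dt}Q(x_t,v_t) \le -2\lambda_1 Q(x_t,v_t)$ for some $\lambda_1>0$. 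This is the hypocoercivity-type computation familiar from \cite{Bolley2010}: the cross term $2\langle x_t,v_t\rangle$ is what generates coercivity in the position variable, and one optimizes $a,b,\lambda_1$ accordingly.

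The second step is to control the perturbation. The Lipschitz bound on $g$ contributes a term bounded by $c_g|x_t|\,|v_t|$-type quantities, hence $\lesssim c_g\, Q(x_t,v_t)$ after using $ab>1$; the interaction term, by Assumption~\ref{ass}(2), satisfies
\[
 |F_t| \le \frac{c_B}{h(t)}\int_{t-h(t)}^t \big(|x_t| + \dist_2(\rho_s,\hat\rho_s)\big)\,ds \le c_B|x_t| + \frac{c_B}{h(t)}\int_{t-h(t)}^t \dist_2(\rho_s,\hat\rho_s)\,ds,
\]
and since $\dist_2(\rho_s,\hat\rho_s)\le \dist_2(\mu_s,\hat\mu_s)\le p^{-1/2}\dist_Q(\mu_s,\hat\mu_s)$, taking expectations and using the sub-optimality of a fixed coupling, one arrives at a differential inequality of Halanay type:
\[
 \frac{d}{dt}\,\dist_Q(\mu_t,\hat\mu_t)^2 \le -2\lambda_1\,\dist_Q(\mu_t,\hat\mu_t)^2 + 2\lambda_2 \sup_{s\in[t-h(t),t]} \dist_Q(\mu_s,\hat\mu_s)^2,
\]
with $\lambda_2$ a constant proportional to $\eta = c_g + 2c_B$ (the precise constants $\lambda_1,\lambda_2$ being what the statement calls \eqref{eq:lambdas}). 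One needs $\lambda_1>\lambda_2$, which holds provided $\eta<\eta_0$ for a suitable threshold $\eta_0$; then Proposition~\ref{lem:halanay} applied to $y(t) = \dist_Q(\mu_t,\hat\mu_t)^2$ yields the exponential decay with rate $2\lambda = 2\lambda_1 - \tfrac{1}{H}W(2\lambda_2 H e^{2\lambda_1 H})$, which upon taking square roots gives \eqref{eq:ineq dq} with the stated $\lambda$ — up to the bookkeeping of factors of $2$ that I would absorb into the definition of $\lambda_1,\lambda_2$.

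The main obstacle is twofold. First, the rigorous justification of differentiating $\E[Q(x_t,v_t)]$ under the expectation and of the coupling argument: one should work with a fixed (not necessarily optimal) coupling of $\mu_0,\hat\mu_0$, propagate it through the synchronously-coupled SDEs, derive the pointwise (in $\omega$) ODE inequality for $Q(x_t,v_t)$, take expectations, and only at the end pass to the infimum over initial couplings — monotonicity of the Halanay bound in the initial data makes this legitimate. Second, and more delicate, is the treatment of the delay term: the quantity $\dist_2(\rho_s,\hat\rho_s)$ for $s$ in the past must be bounded by the \emph{same} coupled quantity $\E[Q(x_s,v_s)]$ evaluated along the coupling we have already fixed, not by the (smaller) optimal Wasserstein distance at time $s$; this is consistent because the fixed coupling at time $s$ is an admissible coupling of $\rho_s,\hat\rho_s$, so its cost dominates $\dist_2(\rho_s,\hat\rho_s)^2$, and it is exactly $\E[Q(x_s,v_s)]$ up to the norm equivalence constant $p$. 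Getting the constants $\lambda_1,\lambda_2$ and the threshold $\eta_0$ in closed form, and verifying $\lambda>0$, is then a matter of the explicit optimization in step one combined with the $W$-function identity already exploited in the proof of Proposition~\ref{lem:halanay}.
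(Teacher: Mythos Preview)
Your proposal is correct and follows essentially the same route as the paper: synchronous coupling of the two processes, a mixed quadratic Lyapunov functional $Q$ to manufacture coercivity in $x$, Young-type estimates on the Lipschitz and delay interaction terms (with the past bounded via the same fixed coupling, exactly as you note), a Halanay differential inequality for $\J_t=\E[Q(x_t,v_t)]$, and finally optimization over initial couplings. The only refinement in the paper is that the $g$ and $B$ contributions are not treated as a pure additive perturbation of the bare-oscillator rate --- they are kept inside the quadratic-form computation, so the resulting $\lambda_1$ in \eqref{eq:lambdas} already carries an $\eta$-dependence, and the explicit choice $a=b+\gamma$, $b=2/\gamma$ (found by first setting $\eta=0$) is what yields the closed-form constants.
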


The idea of the proof is to find an appropriate positive quadratic form $Q$, such that a differential inequality appears. Applying the Halanay inequality (cf.~Lemma~\ref{lem:halanay}) onto this differential inequality would then yield the required decay estimate \eqref{eq:ineq dq}.

\begin{proof}
Let $\{\Z,\,t\ge 0\}$ and $\{\hat\Z,\,t\ge 0\}$ be two solutions of \eqref{eq:general_meanfield} corresponding to the distributions $\mu$ and $\hat\mu\in\C([0,\infty),\P_2(\M))$ respectively. Supposing that both of these solutions satisfy the equations with the same Brownian motion $\{W_t\}_{t\ge 0}$ in $\R^d$ and denoting the difference of the solutions by $x_t = \X-\hat \X$, $v_t = \V - \hat \V$, we have that $z_t=(x_t,v_t)\in\M$ satisfies
 \begin{subequations}\label{eq:difference}
 \begin{align}
  dx_t &= v_t dt\\
  dv_t &= \left[(A(\X) - A(\hat \X) ) - \frac{1}{h(t)}\int_{t-h(t)}^{t} \big(\K[\mu_s](\X) - \K[\hat\mu_s](\hat\X)\big)\,d s -\gamma v_t \right] d t,
 \end{align}
 \end{subequations}
 where for ease of notation, we define the operator $\K$ acting on probability measures as
 \[
  \K[\mu_t](x) := \int_{\R^d} B(x, y)\,\rho_t(dy).
 \]
Before proceeding, we compute the evolution of several functionals that will be useful.
\begin{align*}
 \frac{d}{dt}|x_t|^2 &= 2\langle x_t,v_t\rangle \\
 \frac{d}{dt}|v_t|^2 &= 2\left[\langle v_t, A(\X) - A(\hat \X)\rangle - \frac{1}{h(t)}\int_{t-h(t)}^{t} \langle v_t,\K[\mu_t](\X) - \K[\hat\mu_t](\hat\X)\rangle d s - \gamma |v_t|^2\right]\\
 \frac{d}{dt}\langle x_t,v_t\rangle &= |v_t|^2 + \langle x_t,A(\X)-A(\hat \X)\rangle \\
 &\hspace{10em}- \frac{1}{h(t)}\int_{t-h(t)}^{t} \langle x_t,\K[\mu_t](\X) - \K[\hat\mu_t](\hat\X)\rangle ds - \gamma \langle x_t,v_t\rangle.
\end{align*}
Now define the functional
\[
 \J_t:= \E\lb Q(z_t) \rb = \E\lb a|x_t|^2 + 2\langle x_t,v_t\rangle + b|v_t|^2 \rb,
\]
where $a,b>0$ are appropriately chosen later. From the previous computations, we obtain
\begin{align*}
 \frac{d}{dt}\J_t &= -2\,\E\lb |x_t|^2 - (a-b-\gamma)\langle x_t,v_t\rangle + (b\gamma - 1)|v_t|^2 \rb \\
 &\hspace*{10em} + 2\,\E\lb \langle x_t,g(\X) - g(\hat \X)\rangle + b\langle v_t,g(\X) - g(\hat \X)\rangle \rb \\
 &\hspace*{2em} - \frac{1}{h(t)}\int_{t-h(t)}^{t} 2\,\E\lb \langle x_t,\K[\mu_t](\X) - \K[\hat\mu_t](\hat\X)\rangle + b\langle v_t,\K[\mu_t](\X) - \K[\hat\mu_t](\hat\X)\rangle \rb ds.
\end{align*}
Setting $\pi_t = \rho_t\otimes\hat\rho_t$, we estimate the terms separately to obtain
\begin{gather*}
 \langle x_t,g(\X) - g(\hat \X)\rangle \le c_g|x_t|^2,\qquad \langle v_t,g(\X) - g(\hat \X)\rangle\le c_g\lb\frac{\delta_1}{2}|x_t|^2 + \frac{1}{2\delta_1}|v_t|^2\rb,
\end{gather*}
\begin{align*}
 \langle x_t,\K[\mu_s](\X) - \K[\hat\mu_s](\hat\X)\rangle &= \iint_{\R^d\times\R^d} \langle x_t, B(\X,y)-B(\hat\X,\hat y)\rangle\, \pi_s(dy,d\hat y) \\
 &\le c_B\lb \bl 1 + \frac{\delta_2}{2}\br|x_t|^2 + \frac{1}{2\delta_2}\iint_{\R^d\times\R^d} |y-\hat y|^2\, \pi_s(dy,d\hat y)\rb,
\end{align*}
\begin{align*}
  \langle v_t,\K[\mu_s](\X) - \K[\hat\mu_s](\hat\X)\rangle &= \iint_{\R^d\times\R^d} \langle v_t, B(\X,y)-B(\hat\X,\hat y)\rangle\, \pi_s(dy,d\hat y) \\
  &\hspace{-4em}\le c_B\lb\frac{\delta_3}{2}|x_t|^2 + \frac{1}{2}\bl\frac{1}{\delta_3} + \delta_4  \br|v_t|^2 + \frac{1}{2\delta_4}\iint_{\R^d\times\R^d} |y-\hat y|^2\, \pi_s(dy,d\hat y)\rb.
\end{align*}
Putting all these terms together yields
\begin{align*}
 \frac{d}{dt}\J_t &\le -2\bl 1 - \bl c_g + c_B + \frac{b}{2}(c_g\delta_1 + c_B\delta_3) + c_B\frac{\delta_2}{2} \br\br  \E\lb|x_t|^2\rb - 2(a- b-\gamma)\E\lb \langle x_t,v_t\rangle \rb \\
 &\hspace*{-1em}- 2\bl b\gamma - 1 - \frac{b}{2}\bl\frac{c_g}{\delta_1} + \bl \frac{1}{\delta_3} + \delta_4\br c_B \br\br\E\lb|v_t|^2 \rb + c_B\bl \frac{1}{\delta_2} + b\frac{1}{\delta_4} \br \frac{1}{h(t)}\int_{t-h(t)}^t \E\lb |x_s|^2\rb ds.
\end{align*}
Choosing $a= b+\gamma$, $\delta_1=\delta_3=\delta_4=1$ and $\delta_2=2 + b$, we obtain
\begin{align*}
  \frac{d}{dt}\J_t &\le -\big( 2 - \bl 2 + b \br\eta\big)  \E\lb|x_t|^2\rb - \big( 2b\gamma - 2 - b\eta\big) \E\lb|v_t|^2 \rb \\
  &\hspace*{16em}+ \frac{\eta}{2}\bl b + \frac{1}{2+b} \br \frac{1}{h(t)}\int_{t-h(t)}^t \E\lb |x_s|^2\rb ds.
\end{align*}
where $\eta=c_g + 2c_B$ and we used the fact that $c_B \le \eta/2$ in the last term.

Recall from Lemma~\ref{lem:quadratic} that the quadratic form $Q$ may be represented as
\[
 Q(z) = \langle Sz, Sz\rangle =: \langle \xi,\xi\rangle,\qquad S=\frac{1}{\sqrt{ b+\gamma}}\begin{bmatrix}
    ( b+\gamma)\,\id_d & \id_d \\[1ex]
    0 & \sqrt{b( b+\gamma)-1}\,\id_d
   \end{bmatrix}.
\]
Furthermore, the first two terms on the right hand side may be equivalently written as
\[
 \big( 2 - \bl 2 + b \br\eta\big)  \E\lb|x_t|^2\rb + \big( 2b\gamma - 2 - b\eta\big) \E\lb|v_t|^2\rb  = \E\lb \langle\xi_t, D_1\xi_t\rangle \rb,
\]
with the matrix
\[
 D_1 = d_1\begin{bmatrix}
    \id_d & -d_2\id_d  \\
    -d_2\id_d  & d_2^2(1+d_3)\,\id_d
   \end{bmatrix},
\]
where $d_i>0$, $i=1,2,3$, are given explicitly by
\[
 d_1 = \frac{2 - (2+b)\eta}{ b + \gamma},\qquad d_2 = \frac{1}{\sqrt{b( b+\gamma)-1}},\qquad d_3 = \frac{ b+\gamma}{d_1} (2b\gamma -2 -b\eta).
\]
Similarly, the last term may be reformulated to read
\[
 \frac{\eta}{2}\bl b + \frac{1}{2+b} \br \frac{1}{h(t)}\int_{t-h(t)}^t \E\lb |x_s|^2\rb ds = \frac{1}{h(t)}\int_{t-h(t)}^t \E\lb \langle \xi_t, D_2 \xi_t\rangle \rb ds,
\]
with 
\[
 D_2 = d_4\begin{bmatrix}
    \id_d & -d_2\id_d \\
    -d_2\id_d & d_2^2\,\id_d
   \end{bmatrix},\qquad d_4 = \frac{(1+b)^2}{(2+b)( b+\gamma)}\frac{\eta}{2}.
\]
At this point, we clearly see the conditions on the parameters $b,\gamma$ and $\eta$, such that $Q$ becomes a positive quadratic form. More concretely, we require that
\[
 b( b + \gamma) > 1,\qquad 2 > (2+b)\eta,\qquad 2b\gamma > 2 + b\eta,
\]
which guarantees that $d_i>0$. Reformulating the inequalities for $\eta$ and $b$ provide the conditions
\[
 0<\eta < 1 + \gamma - \sqrt{1 + \gamma^2},\qquad \frac{1}{2\gamma-\eta} < \frac{b}{2} < \frac{1-\eta}{\eta}.
\]
Continuing where we left off, we reformulate the inequality for $\J_t$ in terms of $\xi_t$ to obtain
\begin{align*}
 \frac{d}{dt}\J_t &\le - d_1\Big( \E\lb |\xi_{1,t}|^2 \rb - 2d_2\E\lb\langle \xi_{1,t},\xi_{2,t}\rangle\rb + d_2^2(1+d_3)\E\lb|\xi_{2,t}|^2 \rb \Big) \\
 &\hspace{6em}+ \frac{1}{h(t)}\int_{t-h(t)}^t d_4\Big(\E\lb |\xi_{1,s}|^2 \rb - 2d_2\E\lb\langle \xi_{1,s},\xi_{2,s}\rangle\rb + d_2^2\E\lb|\xi_{2,s}|^2 \rb\Big) ds \\
 &\le - d_1\Big( (1-\epsilon)\E\lb |\xi_{1,t}|^2 \rb + d_2^2(1+d_3-1/\epsilon)\E\lb|\xi_{2,t}|^2 \rb \Big) \\
  &\hspace{16em}+ \frac{1}{h(t)}\int_{t-h(t)}^t d_4(1+d_2^2)\,\E\lb |\xi_s|^2 \rb ds.
\end{align*}
We now choose $\epsilon>0$ such that $(1-\epsilon) = (1-d_3-1/\epsilon)d_2^2$, which gives
\[
 \epsilon = \frac{1}{2}\bl 1-(1+d_3)d_2^2 + \sqrt{4d_2^2 + (1-(1+d_3)d_2^2)^2} \br.
\]
Consequently, we obtain 
\begin{align}\label{eq:diff_inequality_int}
 \frac{d}{dt}\J_t \le -\lambda_1 \J_t + \lambda_2
 \frac{1}{h(t)} \int_{t-h(t)}^t\J_s ds,
\end{align}
with
\[
 \lambda_1 = \frac{d_1}{2}\bl 1 + (1+d_3)d_2^2 - \sqrt{4d_2^2 + (1-(1+d_3)d_2^2)^2} \br,\qquad \lambda_2 = d_4(1+d_2^2).
\]
This gives the differential inequality
\begin{align}\label{eq:diff_inequality}
 \frac{d}{dt}\J_t \le -\lambda_1 \J_t + \lambda_2\sup\nolimits_{s\in[t-h(t),t]} \J_s,
\end{align}
The goal, now, is to find $b>0$, such that $\lambda_1$ is maximized. To do so, we begin by setting $\eta\equiv 0$. In this case, we have that
\[
 d_1 = 2/(b+\gamma),\qquad d_3 = (b+\gamma)^2(b\gamma-1),\qquad \lambda_2\equiv 0,
\]
which consequently yields,
\[
 \lambda_1 = \gamma - \frac{1}{b+\gamma}\sqrt{4d_2^2 + (2-\gamma(b+\gamma))^2} = \gamma - \sqrt{\frac{(2-b\gamma)^2 + (b\gamma-1)\gamma^2}{b(b+\gamma)-1}},
\]
Therefore, maximizing in $b$ for $\lambda_1$ gives
\begin{align}
 b = \frac{2}{\gamma},\qquad \lambda_1 = \gamma\bl 1 - \sqrt{\frac{\gamma^2}{4+\gamma^2}}\,\br.
\end{align}
With this particular choice of $b>0$, we proceed to find conditions on $\eta>0$ such that $\lambda_1>\lambda_2$ is satisfied. Assuming that
\[
 (\gamma(1-\eta)-\eta) > 0 \quad\Longleftrightarrow\quad 0<\eta<\gamma/(1+\gamma),
\]
we have that $\lambda_i>0$, where $\lambda_i$ for $i=1,2$ are given by
\begin{align}\label{eq:lambdas}
 \lambda_1 = \gamma -\bl 1 + \frac{2\gamma}{4+\gamma^2}\br\eta - \frac{\gamma}{4+\gamma^2}\sqrt{4\eta^2 + (4+\gamma^2)(\gamma-\eta)^2},\qquad \lambda_2 = \frac{(2+\gamma)^2}{(1+\gamma)(4+\gamma^2)}\frac{\eta}{2}.
\end{align}
To further ensure that $\lambda_1>\lambda_2$, we may choose $\eta$ satisfying the inequality
\[
 0<\eta \le \frac{2\gamma}{3(1+\gamma)}<\eta_0,
\]
for some $\eta_0>0$. Finally, we apply Lemma~\ref{lem:halanay} onto \eqref{eq:diff_inequality} to obtain
\begin{align}\label{eq:J_inequality}
 \dist_Q(\mu_t,\hat\mu_t)\le\E\lb Q(\Z-\hat \Z) \rb \le \E\lb Q(Z_0-\hat Z_0)\rb \exp(-\lambda t)\qquad\text{for $t\ge 0$},
\end{align}
with
\[
 \lambda = \lambda_1 - \frac{1}{H}W(\lambda_2 H\exp(\lambda_1 H)).
\]
Optimizing \eqref{eq:J_inequality} over all joint distributions of the random variable $Z_0$ and $\hat Z_0$ with marginals $\mu_0$ and $\hat\mu_0$ respectively provides the required estimate \eqref{eq:ineq dq}, thereby concluding the assertion.
\end{proof}

\subsection{Exponential convergence to a unique equilibrium}

As in \cite{Bolley2010}, the following lemma guarantees the existence of a unique equilibrium in a complete metric space. It is taken from \cite{carrillo}.
\begin{lemma}[cf.~\cite{carrillo}]\label{lem unique equilibrium}
Let $(X,d)$ be a complete metric space and $T_t\colon (X,d)\to (X,d)$ be a continuous semigroup for which, for all $t>0$, there exists $0<L(t)<1$ such that
\begin{align*}
d(T_t(x),T_t (y))\leq L(t)d(x,y),\quad t>0,\quad x,y\in X.
\end{align*}
Then there exists a unique stationary point $x_{\infty}\in X$, i.e., $T_t(x_{\infty})=x_{\infty}$ for all $t>0$.
\end{lemma}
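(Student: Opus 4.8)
The plan is to reduce the statement to the classical Banach fixed-point theorem applied to a single time-$t$ map, and then propagate the resulting fixed point to all times using the semigroup property. First I would fix an arbitrary $t_0>0$ and consider the map $T_{t_0}\colon X\to X$. By hypothesis it is a contraction with Lipschitz constant $L(t_0)\in(0,1)$, and since $(X,d)$ is complete, the Banach fixed-point theorem yields a unique $x_\infty\in X$ with $T_{t_0}(x_\infty)=x_\infty$; moreover the Picard iterates $T_{t_0}^{\,n}(x)$ converge to $x_\infty$ for every $x\in X$.

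Next I would show that this $x_\infty$ is in fact stationary for the whole semigroup. For any $t>0$, the semigroup law gives $T_{t_0}\big(T_t(x_\infty)\big)=T_{t_0+t}(x_\infty)=T_{t+t_0}(x_\infty)=T_t\big(T_{t_0}(x_\infty)\big)=T_t(x_\infty)$, so $T_t(x_\infty)$ is a fixed point of $T_{t_0}$. By the uniqueness part of the Banach theorem, $T_t(x_\infty)=x_\infty$. Since $t>0$ was arbitrary, $x_\infty$ is stationary for $T_t$ for every $t>0$. For uniqueness among all stationary points: if $y\in X$ satisfies $T_t(y)=y$ for all $t>0$, then in particular $T_{t_0}(y)=y$, so $y$ is a fixed point of $T_{t_0}$, and uniqueness of the latter forces $y=x_\infty$.

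The argument is essentially routine; the only points that need a little care are to invoke the contraction estimate with $L(t_0)<1$ together with completeness exactly in the form required by the Banach fixed-point theorem, and to note that the commutation identity $T_{t_0}\circ T_t=T_t\circ T_{t_0}$ used above follows from $T_s\circ T_t=T_{s+t}=T_{t+s}=T_t\circ T_s$. Continuity of the semigroup is not actually needed for the existence or uniqueness of $x_\infty$; it is what additionally guarantees that $x_\infty$ is the genuine long-time limit of the trajectories, since writing $t=nt_0+r$ with $r\in[0,t_0)$ gives $d(T_t(x),x_\infty)\le L(t_0)^n\, d(T_r(x),x_\infty)$, and the orbit $\{T_r(x):r\in[0,t_0]\}$ is bounded by continuity.

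I do not anticipate a genuine obstacle here: the whole content is the standard observation that a contractive semigroup on a complete space has a unique common fixed point, obtained from a single contracting generator map and then transported by commutation. The slight subtlety worth stating explicitly in the write-up is that stationarity for one fixed time $t_0$ already pins down $x_\infty$ uniquely, so no limiting or density argument over the time variable is required.
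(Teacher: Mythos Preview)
The paper does not supply its own proof of this lemma: it is stated with a citation to \cite{carrillo} and then used as a black box in the proof of Theorem~\ref{thm:limit}. So there is nothing to compare against in the paper itself.

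That said, your argument is correct and is exactly the standard one. Fixing a single time $t_0>0$, applying the Banach fixed-point theorem to the contraction $T_{t_0}$ on the complete space $(X,d)$, and then using the commutation $T_{t_0}\circ T_t=T_t\circ T_{t_0}$ to deduce that $T_t(x_\infty)$ is again a fixed point of $T_{t_0}$ --- hence equal to $x_\infty$ --- is precisely how this is done in the references. Your observation that continuity of the semigroup is not needed for the existence and uniqueness of $x_\infty$, but only to control the remainder when writing $t=nt_0+r$ and conclude convergence of full trajectories, is also accurate and worth keeping in the write-up.
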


Using Lemma \ref{lem unique equilibrium}, we can now prove the exponential convergence of all solutions to a unique equilibrium under Assumption~\ref{ass} and for sufficiently small $c_g$ and $c_B$.

\begin{theorem}\label{thm:limit}
Under the assumptions of Proposition~\ref{prop:decay}, there exists a constant $c>0$ such that
\begin{align}\label{eq:dist_2}
\dist_2\left(\mu_t,\hat{\mu}_t\right)\leq c\exp(-\lambda t)\dist_2 \left(\mu_0,\hat{\mu}_0\right)\qquad \text{for $t\geq 0$},
\end{align}
for all solutions $(\mu_t)_{t\geq 0}$ and $(\hat{\mu}_t)_{t\geq 0}$ to \eqref{eq:meanfield distribution} with initial data $\mu_0,\hat{\mu}_0\in\P_2(\M)$, respectively. 

Moreover, \eqref{eq:meanfield distribution} has a unique stationary solution $\mu_{\infty}\in\P_2(\M)$ such that
\begin{align}\label{eq:limit}
\dist_2\left(\mu_t,\mu_{\infty}\right)\leq c\exp(-\lambda t)\dist_2 \left(\mu_0,\mu_{\infty}\right)\qquad \text{for $t\geq 0$},
\end{align}
for any solution $(\mu_t)_{t\geq 0}$ of \eqref{eq:meanfield distribution}, where the decay rate $\lambda$ is as given in Proposition~\ref{prop:decay}.
\end{theorem}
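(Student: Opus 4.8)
The plan is to derive everything from Proposition~\ref{prop:decay} by transferring its $\dist_Q$-estimate to $\dist_2$ through the equivalence of the two distances, and to obtain the stationary state from the abstract fixed-point result Lemma~\ref{lem unique equilibrium}. Note first that \eqref{eq:meanfield distribution} is the special case of \eqref{eq:general_meanfield} with $A=-\nabla_x\Phi$ and $B(x,\hat x)=-\nabla_x U(x-\hat x)$, so that under the standing hypotheses Proposition~\ref{prop:decay} is applicable.

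\emph{Contraction estimate.} For the optimal choice $a=b+\gamma$, $b=2/\gamma$ made in the proof of Proposition~\ref{prop:decay} one has $ab>1$, so Remark~\ref{rem:quadratic}(3) furnishes constants $0<p\le q$ with $p|z|^2\le Q(z)\le q|z|^2$ on $\M$. Integrating these pointwise bounds against an arbitrary coupling and passing to the infimum yields $p\,\dist_2^2\le\dist_Q^2\le q\,\dist_2^2$. Chaining this with $\dist_Q(\mu_t,\hat\mu_t)\le e^{-\lambda t}\dist_Q(\mu_0,\hat\mu_0)$ from Proposition~\ref{prop:decay} gives \eqref{eq:dist_2} with $c=\sqrt{q/p}$.

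\emph{Uniqueness and \eqref{eq:limit}.} If $\mu_\infty,\nu_\infty\in\P_2(\M)$ are two stationary solutions, the constant-in-time curves $t\mapsto\mu_\infty$ and $t\mapsto\nu_\infty$ both solve \eqref{eq:meanfield distribution}, since the delay average $\tfrac{1}{h(t)}\int_{t-h(t)}^t$ of a time-independent measure returns that measure for every $h(t)$; applying \eqref{eq:dist_2} to these two solutions gives $\dist_2(\mu_\infty,\nu_\infty)\le c\,e^{-\lambda t}\dist_2(\mu_\infty,\nu_\infty)$ for all $t\ge 0$, hence $\mu_\infty=\nu_\infty$. Once existence is established, \eqref{eq:limit} is simply \eqref{eq:dist_2} with $\hat\mu_0=\mu_\infty$.

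\emph{Existence of $\mu_\infty$.} This is where some care is needed: because $h(t)=\min\{t,H\}$ is not constant, the solution map $\mu_0\mapsto\mu_t$ is \emph{not} a semigroup on $\P_2(\M)$, so Lemma~\ref{lem unique equilibrium} cannot be applied to it directly. I would instead work on the history space $\mathcal X:=\C([-H,0],\P_2(\M))$ endowed with $d_\infty(\phi,\psi)=\sup_{\theta\in[-H,0]}\dist_2(\phi(\theta),\psi(\theta))$, which is complete because $(\P_2(\M),\dist_2)$ is. For $t\ge H$ the equation \eqref{eq:meanfield distribution} has the constant delay $H$ and carries no explicit time dependence, so by Proposition~\ref{prop:wellposedness} the segment shift $S_\tau\colon(\mu_{t+\theta})_{\theta\in[-H,0]}\mapsto(\mu_{t+\tau+\theta})_{\theta\in[-H,0]}$ is a well-defined continuous semigroup on $\mathcal X$. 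Re-running the energy computation behind Proposition~\ref{prop:decay} on $[H,\infty)$ with two arbitrary initial histories produces the same differential inequality $\tfrac{d}{d\sigma}\J_\sigma\le-\lambda_1\J_\sigma+\lambda_2\sup_{s\in[\sigma-H,\sigma]}\J_s$ for $\J_\sigma:=\E\lb Q(z_\sigma)\rb$ of the difference process $z_\sigma$, so Lemma~\ref{lem:halanay} together with $p|z|^2\le Q(z)\le q|z|^2$ gives $d_\infty(S_\tau\phi,S_\tau\psi)\le\sqrt{q/p}\,e^{\lambda H/2}e^{-\lambda\tau/2}\,d_\infty(\phi,\psi)$; in particular $S_{\tau_0}$ is a strict contraction for $\tau_0$ large enough. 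Lemma~\ref{lem unique equilibrium} — whose proof only uses contractivity of a single $S_{\tau_0}$ together with the semigroup property — then yields a unique $S_\tau$-invariant $\phi_\infty\in\mathcal X$, which is necessarily a constant history $\phi_\infty(\theta)\equiv\mu_\infty$, and unwinding the definitions identifies $\mu_\infty\in\P_2(\M)$ as the desired stationary solution of \eqref{eq:meanfield distribution}. The main obstacle is exactly this last step: recasting the finite-memory, non-autonomous dynamics as an autonomous contraction semigroup on $\mathcal X$ and checking that the decay rate produced by Halanay's inequality is retained after this passage; everything else is routine bookkeeping on top of Proposition~\ref{prop:decay} and the equivalence of $\dist_2$ and $\dist_Q$.
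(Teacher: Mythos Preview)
Your derivation of \eqref{eq:dist_2} is exactly the paper's: invoke Proposition~\ref{prop:decay}, then pass from $\dist_Q$ to $\dist_2$ via the equivalence in Remark~\ref{rem:quadratic}(3). Your direct uniqueness argument (apply \eqref{eq:dist_2} to two constant-in-time stationary curves) is also equivalent to what the paper obtains from Lemma~\ref{lem unique equilibrium}.

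Where you diverge is on \emph{existence} of $\mu_\infty$. The paper simply asserts that $T_t\mu_0:=\mu_t$ is a continuous semigroup on $(\P_2(\M),\dist_Q)$ and feeds this into Lemma~\ref{lem unique equilibrium}. You correctly observe that this is not literally true: because $h(t)=\min\{t,H\}$, the evolution is non-autonomous on $[0,H]$, and even for $t\ge H$ the future depends on the whole segment $(\mu_s)_{s\in[t-H,t]}$, not on $\mu_t$ alone; hence $T_{t+s}\ne T_t\circ T_s$ on $\P_2(\M)$. Your remedy --- lift to the history space $\mathcal X=\C([-H,0],\P_2(\M))$, where the constant-delay dynamics for $t\ge H$ \emph{is} an autonomous semigroup, rerun the energy computation to get the same Halanay differential inequality, and then apply Lemma~\ref{lem unique equilibrium} there --- is the standard and rigorous way to close this gap. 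The paper's short argument should be read as a shorthand for precisely this construction; your version makes explicit what the paper leaves implicit.

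Two small points to keep in mind when you write it out in full. First, Lemma~\ref{lem:halanay} as stated assumes constant initial history $y(s)=y(t_0)$ on $[t_0-H,t_0]$; for arbitrary initial segments you need the (equally classical) variant with $y(t_0)$ replaced by $\sup_{s\in[t_0-H,t_0]}y(s)$, which is what produces your extra factor $e^{\lambda H/2}$. Second, to bound $\sup_{s\in[-H,0]}\J_s$ by $d_\infty(\phi,\psi)^2$ you must couple the two initial \emph{histories} (not just the time-zero marginals) and then optimize at the end, mirroring the last step of the proof of Proposition~\ref{prop:decay}. Both are routine, but they are the places where the bookkeeping has to be done carefully.
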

\begin{proof}
 From Proposition~\ref{prop:decay} we obtain a positive quadratic form 
 \[
  Q(z)=a|z_1|^2 + 2\langle z_1,z_2\rangle + b|z_2|^2,\qquad a,b>0,\;ab>1,
 \]
 such that \eqref{eq:ineq dq} holds. Consequently, the metrics $\dist_Q$ and $\dist_2$ are equivalent in $\P_2(\M)$ (cf.~\ref{rem:quadratic}), which provides a constant $c>0$ such that \eqref{eq:dist_2} holds true. 
 
 Owing to the equivalence of $\dist_Q$ and $\dist_2$, we find that $(\P_2(\M),\dist_Q)$ is a complete metric space (cf.~Remark~\ref{rem:quadratic}). Further, $T_t\mu_0 := \mu_t$ defines a continuous semigroup on $(\P_2(\M),\dist_Q)$ that satisfies \eqref{eq:ineq dq}, which provides for a contraction, for $t>0$. Therefore, Lemma~\ref{lem unique equilibrium} guarantees the existence of a unique stationary solution $\mu_\infty\in\P_2(\M)$, thereby concluding the proof.
\end{proof}

\begin{remark}
 According to the previous result, we recover the stationary state $\mu_\infty$ when passing to the limit $t\to\infty$, which satisfies the stationary equation
 \begin{align*}
  v\cdot\nabla_x \mu_\infty + S[\mu_\infty] = \nabla_v\cdot\big(\sigma\nabla_v\mu_\infty + \gamma v\mu_\infty\big),
 \end{align*}
 where $S$ is defined by \eqref{eq:S_operator}, which in the stationary case, yields
 \[
  S[\mu_\infty] = -\nabla_x\Phi\cdot \nabla_v \mu_\infty - \nabla_v \cdot \left[\bl  \int_{\M} \nabla U(\cdot-\hat x) \mu_\infty(d\hat x,d\hat v) \br \mu_\infty\right].
 \]
 Using an Ansatz function of the form
 \begin{align}\label{eq:stationary ansatz}
 \mu_\infty(x,v)=\frac{1}{\sqrt{2\pi\vartheta^2}^d}\exp\left(-\frac{|v|^2}{2\vartheta^2}\right)\rho_\infty(x),\qquad \int_{\R^d}\rho_\infty\,dx=1,
 \end{align}
 for some function $\rho_\infty$ and a parameter $\theta$ it is easy to verify that 
 \begin{align*}
 \nabla_v\cdot\big(\sigma\nabla_v\mu_\infty + \gamma v\mu_\infty\big)=0 \quad\text{for} \quad \vartheta=\sqrt{\sigma/\gamma}.
 \end{align*}
 Hence it suffices to look for a solution of  
 \begin{align}\label{eq:stationary help eq}
   v\cdot\nabla_x \mu_\infty + S[\mu_\infty] =0. 
 \end{align}
 Inserting the ansatz \eqref{eq:stationary ansatz} into \eqref{eq:stationary help eq} and reformulating $\nabla_x \rho_\infty =\rho_\infty\nabla_x \log \rho_\infty$ results in the equation
 \[
 \rho_\infty\nabla_x\left(\vartheta^2\log \rho_\infty + V + U\star\rho_\infty\right)=0,
 \]
 where $\star$ denotes the convolution operator in $x$. This leads to the integral equation
 \[
  \vartheta^2\log \rho_\infty + V + U\star\rho_\infty = c,
 \]
 where the constant $c$ is uniquely determined by the normalizing constraint $\int_{\R^d}\rho_\infty\,dx=1$. 
 
 Note that the integral equation may be expressed in the equivalent fixed point form
 \[
 \rho_\infty =\frac{e^{-(V+U\star\rho_\infty)/\vartheta^2}}{\int_{\R^d}e^{-(V+U\star\rho_\infty)/\vartheta^2} dx}.
 \]
 If additionally, $U(x)=U(-x)$, then $\rho_\infty$ may be characterized as a minimizer of the functional
 \[
  \mathcal{F}(\rho_\infty) = \int_{\R^d} \vartheta^2\big(\log \rho_\infty -1\big)\rho_\infty(dx) + \int_{\R^d} V\rho_\infty(dx) + \frac{1}{2}\int_{\R^d} (U\star\rho_\infty )\rho_\infty(dx),
 \]
 where the first term describes the internal energy (entropy), the second the potential energy, and the third the interaction energy.
\end{remark}

\subsection{Discussion on decay rates} Without loss of generality, we only consider the case $\alpha\equiv 1$. Rewriting the decay rate \eqref{eq:rate} as
\[
 \lambda(\hat H) = \lambda_1\big( 1 - W(\hat \lambda \hat H\exp(\hat H))/\hat H \big),
\]
with $\hat H = \lambda_1 H$ and $\hat \lambda = \lambda_2/\lambda_1$, we see that $\lambda$ is monotone decreasing with respect to $\hat H$, with
\[
 \lambda(0) = \lambda_1(1-\hat\lambda)=\lambda_1-\lambda_2,\qquad \lim\nolimits_{\hat H\to\infty}\lambda(\hat H) = 0.
\]
In Figure \ref{fig:ineq} we plot the constraint on $\eta$ for a given fixed value of $\gamma$. More precisely, we plot the domain  
$\eta\le \bar\eta(\gamma):=2\gamma/(3+3\gamma)$, which satisfies $\eta\in[0,\eta_0)$, with $\eta_0>0$ given in Theorem~\ref{prop:decay}.

\begin{figure}[h]\centering
	\includegraphics[width=0.5\textwidth]{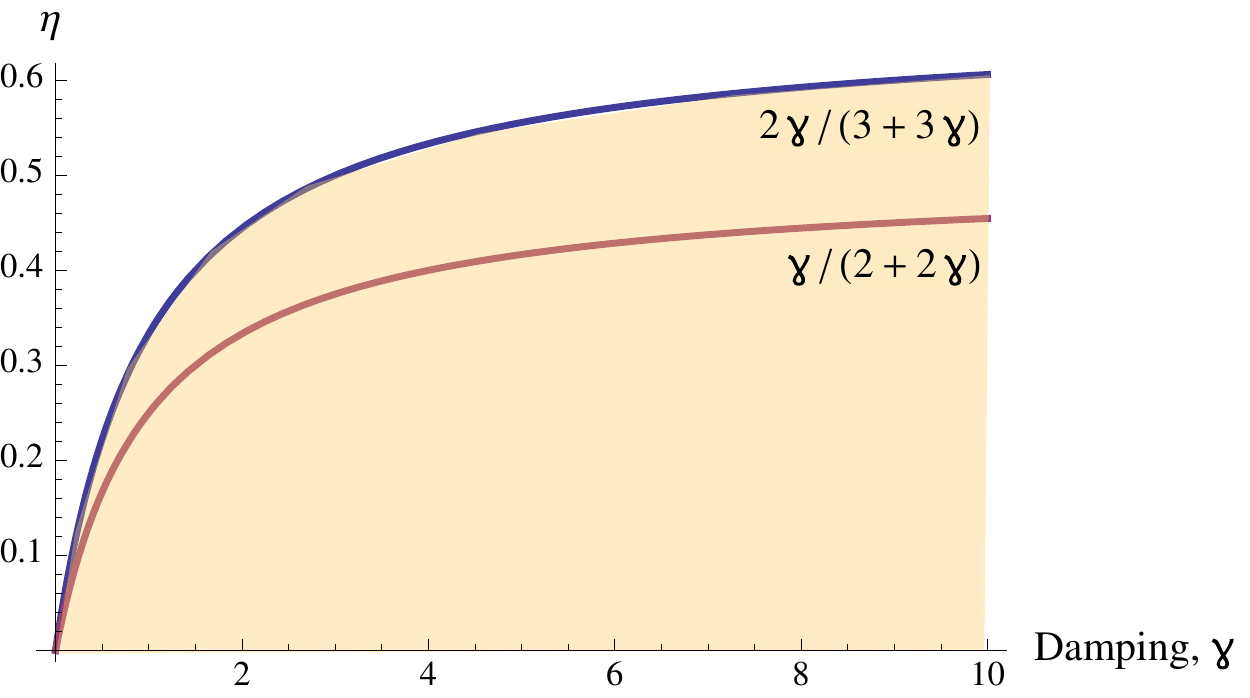}
	\caption{Region of validity $\eta\in[0,\bar\eta(\gamma)]$}\label{fig:ineq}
\end{figure}

\subsubsection{The case $\eta=0$} In this case, the interacting system reduces to a non-interacting system, see for example \cite{ExponentialRateOfConvergenceToEquilibrium} for the investigation of such systems. We recover known results on {\em hypocoercivity} with the explicit decay rate
\[
 \lambda(\gamma) = \gamma\bl 1 - \sqrt{\frac{\gamma^2}{4+\gamma^2}}\,\br,
\]
depicted in Figure~\ref{fig:case1}. The decay rate is maximal when $\gamma=(2(\sqrt{5}-1))^{1/2}$. 
\begin{figure}[h]
 \includegraphics[width=0.5\textwidth]{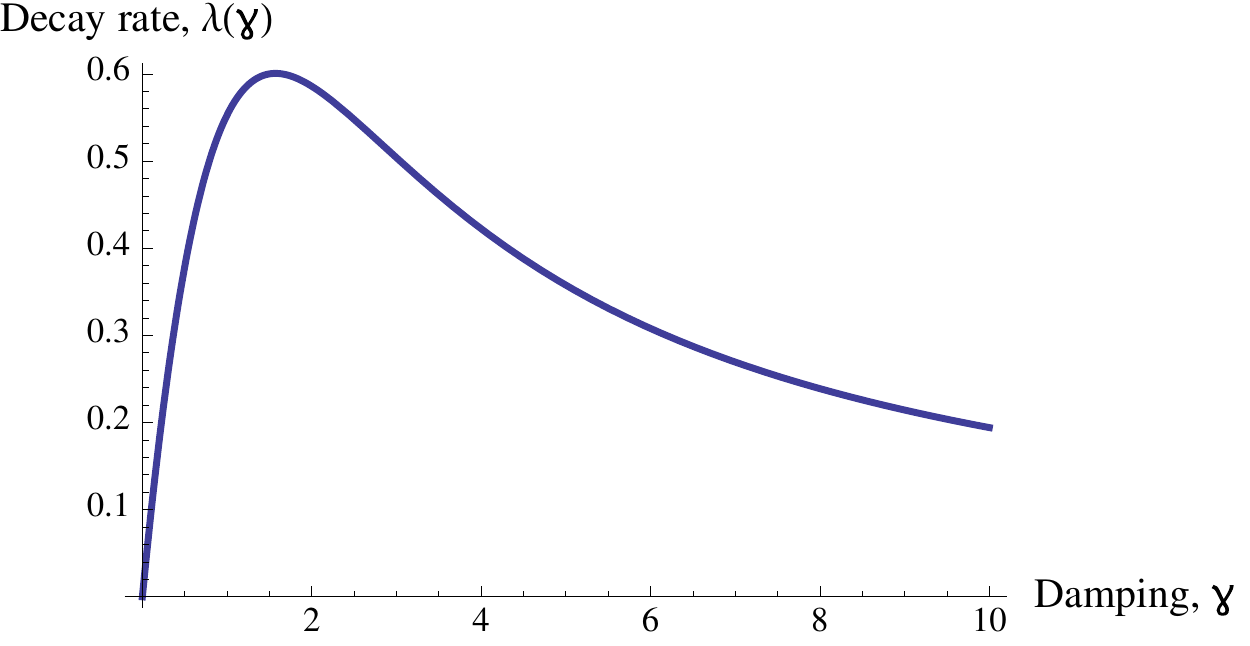}
 \caption{Rate of convergence without interaction $\eta\equiv 0$}\label{fig:case1}
\end{figure}

\subsubsection{The non-retarded case $\eta>0$} This case recovers the estimates shown in \cite{Bolley2010}, however, here we obtain the explicit form of $\lambda=\lambda(\gamma)$. In order to guarantee convergence towards the unique equilibrium state $\mu_\infty$, we choose 
$\eta=2\gamma/(3+3\gamma)$ and $\eta=\gamma/(2+2\gamma)$, which satisfies the condition on $\eta$ discussed above (cf.~Figure~\ref{fig:ineq}).
This  leads to the decay rates shown in Figure~\ref{fig:case2}, which strongly resembles the decay behavior seen in Figure~\ref{fig:case1}. One also notices the decrease in decay rates for increasing values of $\eta$, which clearly signifies the effect of the influence of interaction.
\begin{figure}[h]\centering
\includegraphics[width=0.5\textwidth]{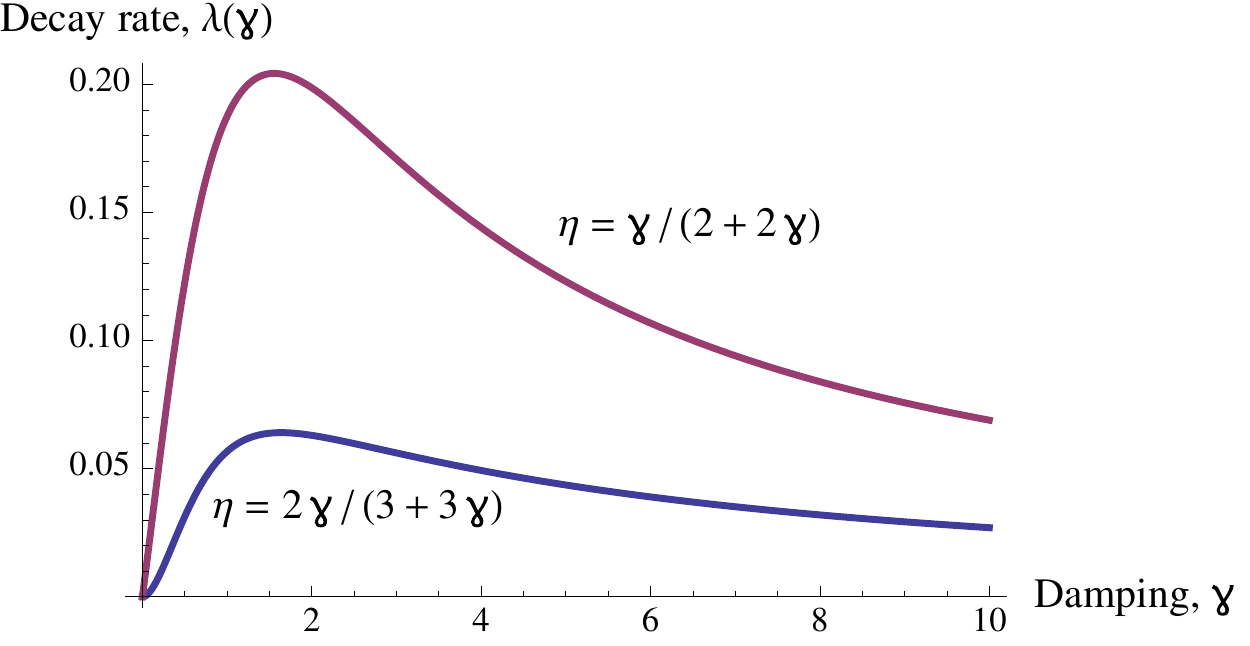}
\caption{Decay rates for various $\eta=\eta(\gamma)$, delay $H\equiv 0$}\label{fig:case2}
\end{figure}

\subsubsection{The retarded case $H\in(0,\infty)$} As expected, the decay rate decreases monotonically with increasing $H$ as seen in Figure~\ref{fig:case3}A. Here, we choose $\gamma=1$, $\eta=\gamma/(2+2\gamma)$ and vary $H\in(0,\infty)$. Similarly, Figure~\ref{fig:case3}B shows the decay rates for different delays $H$, with varying $\gamma$, where for each $\gamma$, we observe hypocoercivity trends in the decay rates as in Figure~\ref{fig:case2}.
\begin{figure}[h]\centering
\begin{subfigure}[b]{0.49\textwidth}
\centering
\includegraphics[width=\textwidth]{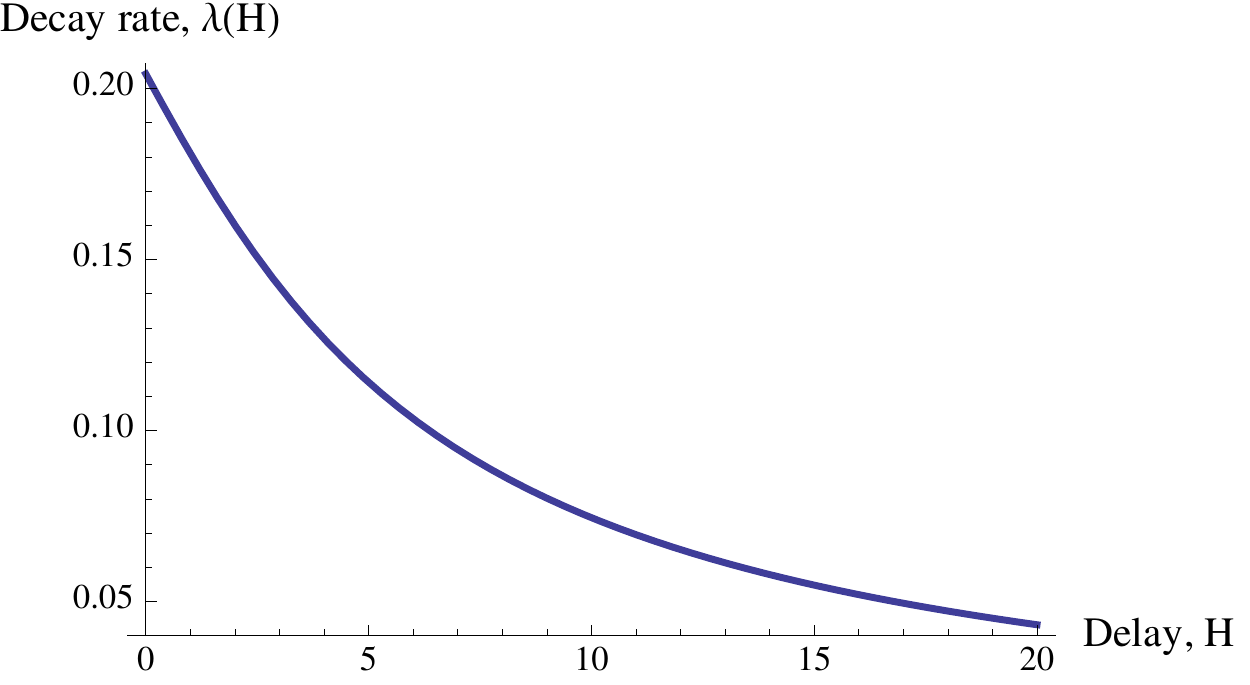}
\caption{Dependence of decay rate on delay $H$}
\end{subfigure}
\hfill
\begin{subfigure}[b]{0.49\textwidth}
\centering
\includegraphics[width=\textwidth]{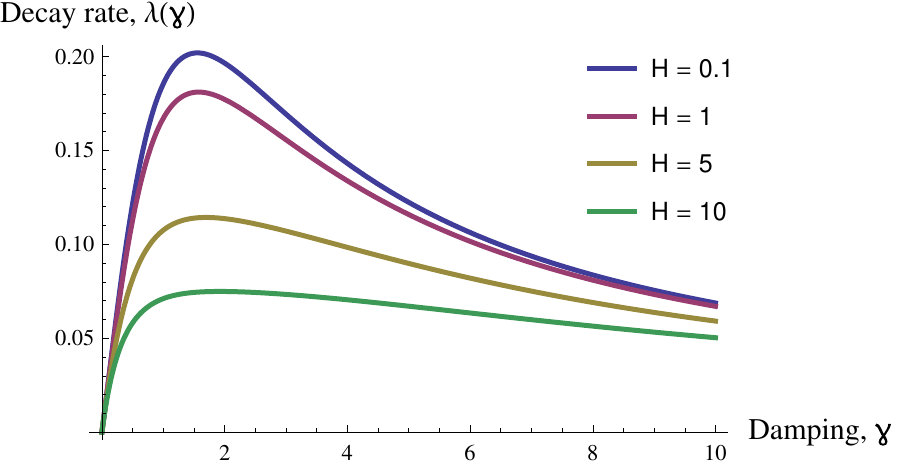}
\caption{Decay rates for various delays $H$}
\end{subfigure}
\caption{With delay $H>0$ and interaction $\eta(\gamma)=\gamma/(2+2\gamma)$}\label{fig:case3}
\end{figure}

\subsubsection{The retarded case $H=\infty$}
In this part, we prove a quantitative  convergence result for all solutions of the delay Vlasov--Fokker--Planck equation \eqref{eq:meanfield distribution} with an infinite cut-off, i.e., $h(t) = t$, associated to the microscopic system (\ref{eq:N interacting fibers}). As observed in Figure~\ref{fig:case3}A, an increase in $H$ reduces the exponential decay rate. In particular, we have that $\lambda\to 0$ as $H\to\infty$. Therefore, in this case, we cannot expect an exponential convergence rate, since the estimate \eqref{eq:ineq dq} given in Theorem~\ref{prop:decay} provides an upper bound for the decay. In fact, we will provide a polynomial (in time) convergence result, which we elucidate in the following discussion.

Our starting point is the differential inequality (\ref{eq:diff_inequality_int}). In the present situation, this reads
\begin{align}\label{eq:diff_inequality_int2}
 \frac{d}{dt}\J_t \le -\lambda_1 \J_t + \lambda_2\frac{1}{t} \int_{0}^t\J_s ds.
\end{align}
Consider  some function $\phi=\phi(t)$ satisfying the equality
\begin{align}\label{eq:kummerint}
 \frac{d\phi}{dt}(t) = -\lambda_1 \phi + \lambda_2\frac{1}{t} \int_{0}^t \phi(s) ds.
\end{align}
We further note that the statement in Proposition~\ref{prop halanay} remains valid, if $f(t,y(t),\sup_{s\in[t-H,t]}y(s))$ is substituted by $f(t,y(t)),(1/t)\int_0^t y(s)\,ds)$ (cf.~\cite{smith1987monotone}). Consequently, one obtains the estimate 
\[
 \J_t \le \phi(t)\qquad\text{for all\; $t\ge 0$}.
\]
Therefore, investigating the asymptotic behavior of equation (\ref{eq:kummerint}) gives us the desired decay property for the retarded mean-field equation \eqref{eq:meanfield distribution} with an infinite cut-off.

Reformulating equation (\ref{eq:kummerint}), we obtain
\begin{align}\label{eq:kummerprev}
\frac{d}{dt}\left(t \frac{d\phi}{dt}\right) =  -\lambda_1 \frac{d}{dt}\left(t \phi\right) + \lambda_2\phi,
\end{align}
or equivalently
\begin{align}\label{eq:kummerprev2}
 t \frac{d^2\phi}{dt^2} + (1+\lambda_1 t)\frac{d\phi}{dt} + (\lambda_1 - \lambda_2 ) \phi =0.
\end{align}
Substituting $u(t) =e^{\lambda_1 t} \phi(t)$ and rescaling time $\tau = \lambda_1 t$ leads to the so-called {\em Kummer's equation}
\begin{align}\label{eq:kummer}
 \tau \frac{d^2u}{d\tau^2} +  (1- \tau) \frac{du}{d\tau} - \frac{\lambda_2}{\lambda_1}u =0,
\end{align}
where $\Lambda = \lambda_2/\lambda_1\in[0,1)$. Kummer's equation is known to be solved by a linear combination of two {\em confluent hypergeometric functions} \cite{abramowitz1964handbook}. To satisfy our initial condition $u(\tau_0)=y_0$, we obtain a particular representation of the solution $u$ in the form of a generalized hypergeometric series
\[
 u(\tau) = M(\Lambda,1,\tau)\,y_0 = \left(\sum\nolimits_{n=0}^\infty \Lambda^{(n)}\frac{\tau^n}{n!}\right) y_0,
\]
where $\Lambda^{(0)}=1$ and $\Lambda^{(n)}=\Lambda(\Lambda+1)(\Lambda+2)\cdots (\Lambda + n -1)$ for $n\ge 1$, is the rising factorial. This function is also known as {\em Kummer's function}. 

For the present parameters, we obtain an asymptotic behavior of the form
\[
 u(\tau) = \frac{1}{\Gamma(\Lambda)}\tau^{\Lambda-1}e^{\tau} + \frac{(-1)^{-\Lambda}}{\Gamma(1-\Lambda)}\tau^{-\Lambda} + \mathcal{O}(\tau)\qquad\text{as\; $\tau\to\infty$}.
\]
%
Transforming the above substitutions backward we obtain an asymptotic behavior for $\phi$
\[
 \phi(t) \sim \frac{\lambda_1^{\Lambda-1}}{\Gamma(\Lambda)} t^{\Lambda-1} = \frac{\lambda_1^{\Lambda-1}}{\Gamma(\Lambda)} \frac{1}{t^{1-\Lambda}}\qquad\text{as\; $t\to\infty$}.
\]
In this way we obtain a polynomial decay, in contrast to the exponential decay for finite cut-off.


\section{Conclusion and outlook}

In this paper, we provided a systematic approach to determining explicit decay rates for general delay Vlasov--Fokker--Planck equation of the form \eqref{eq:meanfield distribution}. Under the conditions stated in Theorems~\ref{prop:decay} and \ref{thm:limit}, the mean-field equation is known to be ergodic and possesses a unique stationary state, whereby exponential rate of convergence is obtained when $H<\infty$. On the other hand, one can only expect polynomial decay to equilibrium when $H=\infty$, i.e., when all the history of the solution paths are taken into consideration. 

As indicated in the introduction, we are especially interested in applying the techniques developed in this paper to the delay mean-field fiber equations discussed in \cite{klar}. Unfortunately, the methods here do not trivially translate to different state spaces such as $\R^d\times \mathbb{S}^{d-1}$, where $\mathbb{S}^{d-1}$ denotes the unit sphere in $\R^d$, since the distances involved are not  purely euclidean in nature.

\appendix
\section{Proof of Proposition~\ref{prop:wellposedness}}\label{proof:wellposedness}
 The proof of the statement follows a Picard type iteration procedure. 
 
 Let $\nu,\hat\nu\in\C([0,T],\P_2(\M))$ be arbitrary and $Z_t,\hat Z_t$ be two stochastic processes satisfying the stochastic differential equations
 \begin{subequations}\label{eq:aux_stochastic}
   \begin{align}
    d\X &= \V dt\\
    d\V &= A(\X) dt + \bl\frac{1}{h(t)}\int_{t-h(t)}^{t} \int_{\M} B(\X,\hat x)\eta_s(d\hat x,d\hat v) \,d s\br dt -\gamma\V d t + \sqrt{2\sigma} d W_t,
   \end{align}
 \end{subequations}
 with $\eta\in\{\nu,\hat\nu\}$ respectively. Notice that we have chosen the same Brownian motion $\{W_t\}$ for both processes. Under Assumptions~\ref{ass}, it is not difficult to see that \eqref{eq:aux_stochastic} admits continuous pathwise unique and adapted sample paths for any $\eta\in \C([0,T],\P_2(\M))$ (cf.~\cite{mohammed, von2010existence}).
 
 Consequently, we may define 
 \[
  \mu_t = \text{law}(\Z),\, \hat\mu_t = \text{law}(\hat\Z)\,\in\, \C([0,T],\P_2(\M)),
 \]
 which in turn defines a self-mapping $\mathcal{T}\colon \nu\mapsto \mu$ within the complete metric space $\C([0,T],\P_2(\M))$. Denoting the difference of the solutions by $x_t = \X-\hat \X$, $v_t = \V - \hat \V$, we have that $z_t=(x_t,v_t)$ satisfies \eqref{eq:difference}. Therefore, applying the It\^o formula yields
 \begin{align*}
  \frac{1}{2}\frac{d}{dt}\E\lb |z_t|^2 \rb &= \E\lb\langle x_t,v_t\rangle\rb + \E\lb\langle v_t, A(\X) - A(\hat \X)\rangle\rb\\
  &\hspace*{6em} - \frac{1}{h(t)}\int_{t-h(t)}^{t} \E\lb\langle v_t,\K[\mu_t](\X) - \K[\hat\mu_t](\hat\X)\rangle\rb d s - \gamma \E\lb|v_t|^2\rb \\
  &\le \frac{1}{2}\bl c_1\E\lb |x_t|^2 \rb +  c_2\E\lb|v_t|^2\rb + \frac{c_B}{h(t)}\int_{t-h(t)} ^t \iint |y_1-\hat y_1|^2 \nu_s(dy)\otimes\hat\nu_s(d\hat y)\, ds \br,
 \end{align*}
 we obtain the differential inequality
 \[
  \frac{d}{dt}\E\lb |z_t|^2 \rb \le c_0 \E\lb |z_t|^2 \rb + c_B\frac{1}{h(t)}\int_{t-h(t)}^t \iint |y_1-\hat y_1|^2 \nu_s(dy)\otimes\hat\nu_s(d\hat y)\, ds,
 \]
Following the arguments made in \cite{klar}, we construct a solution by the Picard iteration procedure, and show that the generated sequence converges. For this reason, we define the sequence $(\mu^{(k)})$ recursively by $\mu^{(k+1)}=\mathcal{T}\mu^{(k)}$ for $k\ge 0$ and denote
\[
 \mathcal{E}_k(t) = \E\lb \big|Z_t^{(k+1)}-Z_t^{(k)}\big|^2 \rb.
\]
We begin by considering the case $h(t)=t$. In this case, we have the estimate
\begin{align*}
 \frac{d}{dt}\mathcal{E}_k(t) \le c_0 \mathcal{E}_k(t) + c_B\frac{1}{t}\int_{0}^t \mathcal{E}_{k-1}(s)\,ds,
\end{align*}
Integrating over time $t\ge0$ and fixing the initial condition $Z_0^{(k)}=z$ for all $k\ge 0$, we have
\[
 \mathcal{E}_k(t) \le c_0 \int_0^t \mathcal{E}_k(s)\,ds + c_B\int_0^t\frac{1}{s}\int_{0}^s \mathcal{E}_{k-1}(\sigma)\,d\sigma\,ds
\]
Setting $g(t,s):=\ln(t)-\ln(s)$, we compute recursively for $\mathcal{E}_k$ to obtain
\begin{align*}
 \mathcal{E}_k(t) &\le  c_Be^{c_0t}\int_0^t (e^{-c_0 s}/s)\int_{0}^s \mathcal{E}_{k-1}(\sigma)\,d\sigma\,ds \le c_Be^{c_0t}\int_0^t g(t,t_1) \,\mathcal{E}_{k-1}(t_1)\,dt_1 \\
 &\le \big(c_B e^{c_0 t}\big)^2\int_0^t g(t,t_1)\int_0^{t_2}g(t_1,t_2)\, \mathcal{E}_{k-1}(t_2)\,dt_2\,dt_1\\
 &\cdots \\
 &\le \big(c_B e^{c_0 t}\big)^k\lb \int_0^t g(t,t_1)\cdots \int_0^{t_{k-1}} g(t_{k-1},t_k)\,dt_k\cdots dt_1\rb\sup\nolimits_{t_k\in[0,T]}\mathcal{E}_0(t_k),
\end{align*}
where in the second inequality, we used the integration by parts formula. Elementary computations of the terms in the bracket gives
\[
 \int_0^t g(t,t_1)\cdots \int_0^{t_{k-1}} g(t_{k-1},t_k)\,dt_k\cdots dt_1 \le \frac{1}{k!}t^k.
\]
Furthermore, note that by definition, we have
\[
 \dist_2(\mu_t^{(k+1)},\mu_t^{(k)})\le\mathcal{E}_k(t)\qquad\text{for all $k\ge0$}.
\]
Therefore, summing up the terms in $k\ge 0$ yields
\[
 \sum\nolimits_{k\ge 0} \dist_2(\mu_t^{(k+1)},\mu_t^{(k)}) \le e^{c t}\sup\nolimits_{t_k\in[0,T]}\mathcal{E}_0(t_k) <+\infty,
\]
with $c=c_B\exp(c_0 T)$, and hence, $\dist_2(\mu_t^{(k+1)},\mu_t^{(k)})\to 0$ for every $t> 0$. Consequently,  the Picard sequence $(\mu^{(k)})$ converges uniformly in $\C([0,T],\P_2(\M))$ to the solution of \eqref{eq:general_meanfield}. Since $T>0$ was chosen arbitrarily, the solution may be extended to the interval $[0,\infty)$.

As for uniqueness, we take two solutions $\mu$ and $\hat \mu$ and denote the difference
\[
 \mathcal{E}(t) = \E\lb \big|Z_t-\hat Z_t\big|^2 \rb.
\]
From the estimate above, we obtain
\[
 \mathcal{E}(t) \le c\int_0^t g(t,s)\,\mathcal{E}(s)\,ds.
\]
Following the computations above, we obtain for some $k_0\in\N$ sufficiently large,
\[
 0\le \bl 1 - \frac{c^k t^k}{k!}\br \sup\nolimits_{t\in[0,T]} \mathcal{E}(t) \le 0\qquad\text{for every $k\ge k_0$}.
\] 
Passing to the limit $k\to\infty$ yields $\mathcal{E}=0$ on $[0,T]$, and hence, $\dist_2(\mu_t,\hat\mu_t)=0$ for all $t\in[0,T]$.

Mimicking the arguments made above, we may obtain well-posedness also for the other cases of $h(t)$. We omit the proof, since it bears similarities to the proof given in \cite{klar}.\hfill$\Box$

\bibliographystyle{plain}
\bibliography{references}

\end{document}